\newtheorem{theorem}{Theorem}[section]
\newtheorem{corollary}[theorem]{Corollary}
\newtheorem{lemma}[theorem]{Lemma}
\newtheorem{definition}{Definition}[section]
\newtheorem*{remark}{Remark}
\newcommand{\N}{\mathbb{N}}
\newcommand{\R}{\mathbb{R}}
\newcommand{\dual}[1]{ {#1}^{\ast} }
\newcommand{\ddual}[1]{ {#1}^{\ast\ast} }
\newcommand{\ap}[1]{ \textnormal{ap}\left(#1\right) }
\newcommand{\wap}[1]{ \textnormal{wap}\left(#1\right) }
\newcommand{\cc}[1]{ \textnormal{cc}\left(#1\right) }
\newcommand{\rcc}[1]{ \textnormal{rcc}\left(#1\right) }
\newcommand{\lcc}[1]{ \textnormal{lcc}\left(#1\right) }
\newcommand{\wpL}[1]{ \textnormal{wpL}\left(#1\right) }
\begin{document}
\title{Weak Sequential Properties of the Multiplication Operators on Banach Algebras}
\author{Onur Oktay}
\maketitle

\begin{abstract}
Let $A$ be a Banach algebra. For $f\in\dual{A}$, we inspect the weak sequential properties of the well-known map $T_f:A\to\dual{A}$, $T_f(a) = fa$, where $fa\in\dual{A}$ is defined by $fa(x) = f(ax)$ for all $x\in A$. We provide equivalent conditions for when $T_f$ is completely continuous for every  $f\in\dual{A}$, and for when $T_f$ maps weakly precompact sets onto L-sets for every $f\in\dual{A}$. Our results have applications to the algebra of compact operators $K(X)$ on a Banach space $X$.
\end{abstract}


\section{Introduction}

It was conjectured in \cite{GRW92} that reflexive amenable Banach algebras are finite dimensional. 
Runde proved this conjecture with additional hypotheses. Precisely, every reflexive amenable Banach algebra $A$ with the approximation property, where the set of almost periodic functionals $\ap{A}$ separates the points, is finite dimensional, see e.g., Corollary 3.3 and the preceding Remark in \cite{Runde01-2}.

$\ap{A}$ is related to the joint weak continuity properties of the multiplication \cite{DuncanHosseiniun79}. When $A$ is reflexive, $\ap{A}$ is separating if and only if the multiplication is jointly weakly sequentially continuous if and only if the multiplication operator $T_f$ is completely continuous for every $f\in\dual{A}$. From this perspective, it is worthwhile to study the weak sequential properties of the multiplication and the associated operators for a general Banach algebra.

In Section~\ref{sec:wp2L}, we study the joint weak sequential continuity of the Banach algebra multiplication, and the associated multiplication operators. In analogy to $\ap{A}$, we define the subspaces $\wpL{A}$, $\lcc{A}$ and $\rcc{A}$ of $\dual{A}$. We provide characterizations of the joint weak sequential continuity of the multiplication in terms of those subspaces. We inspect the relationship between these functionals and several Banach space properties of $A$. The tools we develop in Section~\ref{sec:wp2L} have applications to the algebra of compact operators $K(X)$ on a Banach space $X$, which is contained in Section~\ref{sec:KX}.


\section{Notation and Preliminaries}\label{sec:prelim}

Let $A$ be a Banach space. 
$A$ has {\it Schur property} if every weakly convergent sequence in $A$ is norm convergent. 
$A$ has {\it Dunford-Pettis property (DPP)} if $f_n(x_n)\to 0$ for all weakly null sequences $(x_n)$ in $A$ and $(f_n)$ in $\dual{A}$. 
$A$ has {\it Reciprocal Dunford-Pettis property (RDPP)} if every completely continuous operator from $A$ into another Banach space is weakly compact.

$E\subseteq\dual{A}$ is an {\it L-set} if for every weakly null sequences $(x_n)$ in $A$
$$\lim_{n\to\infty}\sup_{f\in E} |f(x_n)| = 0.$$
It is not difficult to show that $E$ is an L-set if and only if $f_n(x_n)\to 0$ for every sequence $(f_n)$ in $E$, and every weakly null sequence $(x_n)$ in $A$.
Clearly, $A$ has Schur property if and only if the unit ball of $\dual{A}$ is an L-set. 
$A$ has DPP if and only if every weakly precompact subset of $\dual{A}$ is an L-set (\cite{Graves80-ch2}, p.18). 
$A$ has RDPP if and only if every L-set contained in $\dual{A}$ is relatively weakly compact \cite{Leavelle84},\cite{Emmanuele91}. 
$A$ contains no copy of $\ell^1$ if and only if every L-set contained in $\dual{A}$ is relatively compact \cite{Emmanuele86}.


\vspace{3mm}

When $A$ is a Banach algebra, one defines the left and right multiplication operators $L_a,R_a:A\to A$ by $L_ax = ax$ and $R_ax = xa$. Moreover,
%
\begin{eqnarray*}
fa(x) = f(ax) \hspace{7mm} \mu f(a) = \mu(fa) \hspace{7mm} (\mu\cdot_1\nu)(f) = \mu(\nu f) \\
af(x) = f(xa) \hspace{7mm} f\mu (a) = \mu(af) \hspace{7mm} (\nu\cdot_2\mu)(f) = \mu(f\nu)
\end{eqnarray*}
for all $a,x\in A$, $f\in\dual{A}$ and $\mu,\nu\in\ddual{A}$.
The products in the first column make $\dual{A}$ a two-sided $A$-module, and naturally define two multiplication operators $S_f, T_f:A\to\dual{A}$ given by $T_fa = fa$ and $S_fa = af$, for each $f\in\dual{A}$ and all $a\in A$. 
$\ap{A}$ (resp. $\wap{A}$) denotes the space of all $f\in\dual{A}$ for which $T_f$ is a (weakly) compact operator. By duality, $T_f$ is (weakly) compact if and only if $S_f$ is (weakly) compact.

$\ddual{A}$ is a Banach algebra with each of the {\it Arens products} $\cdot_1$ and $\cdot_2$.
The canonical injection of $A$ into $\ddual{A}$ is an isometric isomorphism onto its range for both products. $A$ is {\it Arens regular} if both Arens products are equal on $\ddual{A}$. It is well-known that $A$ is Arens regular if and only if $\dual{A} = \wap{A}$ \cite{Palmer1}. Also $\dual{A} = \wap{A}$ (resp. $\dual{A} = \ap{A}$) if and only if one (thus both) of the Arens products is separately (resp. jointly) bounded weak* continuous \cite{DuncanHosseiniun79}.


\section{Weak sequential continuity of multiplication}\label{sec:wp2L}

Analogous to $\wap{A}$ and $\ap{A}$, we define the subspaces $\wpL{A}$, $\rcc{A}$, $\lcc{A}$ and $\cc{A}$ as below.

\begin{definition}\label{def:wp2L}\normalfont
Let $A$ be a Banach algebra.
\begin{enumerate}[label=\roman*.]
\item $\wpL{A}$ is the set of all $f\in\dual{A}$ such that $T_f$ maps weakly precompact sets onto L-sets,
\item $\lcc{A}$ (resp. $\rcc{A}$) is the set of all $f\in\dual{A}$ such that $T_f$ (resp. $S_f$) is completely continuous,
\end{enumerate}
and $\cc{A} = \lcc{A}\cap\rcc{A}$.
\end{definition}
Clearly $\lcc{A} = \rcc{A}$ if $A$ is commutative. 
Also $\ap{A}\subseteq\cc{A}$, and 
$\lcc{A}\cup\rcc{A}\subseteq\wpL{A}$ by Lemmas~\ref{lemma:wp2L} and \ref{lemma:cc} below.

\begin{lemma}\normalfont\label{lemma:wp2L}
Let $A$ be a Banach algebra and $f\in\dual{A}$. Then, the following are equivalent.
\begin{enumerate}
\item[i.] If $(x_n),(y_n)$ are weakly null sequences in $A$, then $f(x_ny_n)\to 0$.
\item[ii.] If $(x_n)$ is a weakly null and $(y_n)$ is a weakly Cauchy sequence in $A$, then $f(x_ny_n)\to 0$.
\item[iii.] $T_f$ map weakly precompact sets onto L-sets.
\item[iii'.] $S_f$ map weakly precompact sets onto L-sets.
\end{enumerate}
\end{lemma}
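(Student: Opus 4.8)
The plan is to establish the cycle $\text{i} \Rightarrow \text{ii} \Rightarrow \text{iii} \Rightarrow \text{i}$, and then note that $\text{iii} \Leftrightarrow \text{iii}'$ follows by a symmetric argument (or by duality).

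First, for $\text{ii} \Rightarrow \text{i}$ there is nothing to do, since a weakly null sequence is in particular weakly Cauchy; the content is in $\text{i} \Rightarrow \text{ii}$. To prove this, suppose $(x_n)$ is weakly null and $(y_n)$ is weakly Cauchy but $f(x_n y_n) \not\to 0$; passing to a subsequence we may assume $|f(x_n y_n)| \geq \varepsilon$ for all $n$. The idea is to ``diagonalize away'' the Cauchy sequence: since $(y_n)$ is weakly Cauchy, the differences $y_n - y_m$ become weakly small, so one picks a rapidly increasing sequence of indices and forms the weakly null sequence $z_n = y_{k_{n+1}} - y_{k_n}$. Then I would write $x_{k_n} y_{k_n} = x_{k_n} y_{k_1} + \sum_{j=1}^{n-1} x_{k_n} z_j$, apply $f$, and try to show the right-hand side tends to $0$. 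The term $f(x_{k_n} y_{k_1}) = f(R_{y_{k_1}} x_{k_n}) \to 0$ because $x_{k_n} \to 0$ weakly and $R_{y_{k_1}}$ is bounded. For the sum, the natural move is to invoke hypothesis i applied to the pair of weakly null sequences $(x_{k_n})$ and $(z_n)$ after a suitable reindexing — but this requires care, because i only controls the ``diagonal'' $f(x_n z_n)$, not the full double array $f(x_{k_n} z_j)$. The standard remedy is a Rosenthal-type double-sequence argument: use that $(x_n)$ is weakly null together with an iterated extraction to pass to subsequences along which both the ``diagonal'' and the relevant partial sums are controlled, so that $\sup_m |f(x_{k_n} z_m)|$ can be made small for large $n$. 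This is where I expect the main obstacle to lie — converting the diagonal hypothesis i into control of an off-diagonal array.

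For $\text{i} \Leftrightarrow \text{iii}$: recall $E \subseteq \dual{A}$ is an L-set iff $g_n(x_n) \to 0$ for every sequence $(g_n)$ in $E$ and every weakly null $(x_n)$ in $A$, and that a set is weakly precompact iff every sequence in it has a weakly Cauchy subsequence. For $\text{iii} \Rightarrow \text{i}$: given weakly null $(x_n), (y_n)$, the set $\{y_n\}$ is weakly precompact (indeed relatively weakly compact), so $T_f(\{y_n\}) = \{f y_n\}$ is an L-set; since $(x_n)$ is weakly null, $(f y_n)(x_n) = f(y_n x_n) \to 0$. This gives $f(y_n x_n) \to 0$ for all weakly null $(x_n),(y_n)$, which is the symmetric form of i; swapping the roles (or applying the already-proved equivalence of i with its left-right mirror) yields i as stated. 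For $\text{i} \Rightarrow \text{iii}$: let $W$ be weakly precompact and let $(g_n)$ be a sequence in $T_f(W)$, say $g_n = f y_n$ with $y_n \in W$, and let $(x_n)$ be weakly null. By weak precompactness, after passing to a subsequence $(y_n)$ is weakly Cauchy; then $g_n(x_n) = f(y_n x_n) \to 0$ by the (symmetric form of) condition ii, which we have from $\text{i} \Rightarrow \text{ii}$. Hence $T_f(W)$ is an L-set.

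Finally, $\text{iii} \Leftrightarrow \text{iii}'$: condition iii, unwound, says exactly $f(y_n x_n) \to 0$ for $(y_n)$ weakly Cauchy and $(x_n)$ weakly null, while iii$'$ says $f(x_n y_n) \to 0$ under the same hypotheses (since $S_f a = af$ and $(S_f a)(x) = f(xa)$). But condition i is manifestly left-right symmetric in the two weakly null sequences, and both iii and iii$'$ are equivalent to i by the arguments above (the argument for iii$'$ being the verbatim mirror image), so iii $\Leftrightarrow$ iii$'$. Alternatively one can argue directly via the duality between $T_f$ and $S_f$ already noted in the preliminaries. I would present the symmetric case briefly and refer back rather than repeating it in full.
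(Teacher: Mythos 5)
Your overall architecture (i $\Rightarrow$ ii $\Rightarrow$ iii $\Rightarrow$ i, with iii$'$ obtained as the mirror image) is the same as the paper's, and your arguments for ii $\Rightarrow$ iii and iii $\Rightarrow$ i are essentially the paper's (modulo the routine contradiction/subsequence phrasing needed to pass from ``along a weakly Cauchy subsequence'' to the full sequence). The genuine gap is exactly where you flag it: i $\Rightarrow$ ii. Your telescoping decomposition does not reduce the problem, because $\sum_{j=1}^{n-1} z_j = y_{k_n}-y_{k_1}$, so the sum you must control is just $f\bigl(x_{k_n}(y_{k_n}-y_{k_1})\bigr)$, whose second factor is still only weakly Cauchy --- the original difficulty reappears unchanged. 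Moreover, the proposed remedy of making $\sup_m |f(x_{k_n}z_m)|$ small is not something hypothesis i supplies (it amounts to asserting that $\{z_mf : m\in\N\}$ is an L-set, which is close to what is being proved), and even if it were available, smallness of that supremum does not bound a sum of $n-1$ terms without a summable rate, which your sketch does not arrange. As written, the crux implication is an acknowledged conjecture, so the proof is incomplete. (The aside that iii $\Leftrightarrow$ iii$'$ could be had ``by duality'' is also unsupported: the duality recorded in the preliminaries concerns (weak) compactness of $T_f$ and $S_f$, not the L-set property; your mirror-argument route is the right one.)

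The paper closes the gap with a short diagonal trick that avoids off-diagonal arrays altogether. Assume $|f(x_ny_n)|\geq r$ for all $n$. Since $(x_n)$ is weakly null, the sequence $(fx_n)=(T_fx_n)$ is weakly null in $\dual{A}$; in particular, for each fixed $k$, $f(x_ny_k)\to 0$ as $n\to\infty$, so one can extract indices $n_k$ with $f(x_{n_k}y_k)\to 0$ as $k\to\infty$. Because $(y_n)$ is weakly Cauchy, the sequence $(y_{n_k}-y_k)_k$ is weakly null, so hypothesis i applied to the two weakly null sequences $(x_{n_k})$ and $(y_{n_k}-y_k)$ gives $f\bigl(x_{n_k}(y_{n_k}-y_k)\bigr)\to 0$; adding the two pieces yields $f(x_{n_k}y_{n_k})\to 0$, contradicting $|f(x_ny_n)|\geq r$. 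Your recursive-extraction idea could in principle be pushed through (choose $k_n$ so late that the finitely many previously fixed terms $f(x_{k_n}z_j)$, $j\leq n-2$, are summably small, and treat the one remaining ``adjacent'' term $f(x_{k_n}z_{n-1})$ by i), but this bookkeeping is precisely what your proposal leaves open, and the paper's one-step splitting $y_{n_k}=(y_{n_k}-y_k)+y_k$ renders it unnecessary.
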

\begin{proof}
$(i\Rightarrow ii)$ Suppose $f(x_ny_n)\not\to 0$. WLOG, there exist $r>0$ such that $|f(x_ny_n)|\geq r$ for all $n\in\N$.
Since $(fx_n)$ is a weakly null sequence in $\dual{A}$, then $f(x_{n_k}y_k) \to 0$ for a subsequence $(fx_{n_k})_{k\in\N}$. Since $(y_{n_k}-y_k)_{k\in\N}$ is weakly null, then $$\lim_{k\to\infty} f(x_{n_k}y_{n_k}) = \lim_{k\to\infty} f(x_{n_k}(y_{n_k}-y_k)) + f(x_{n_k}y_k) = 0.$$ Contradiction.

$(ii\Rightarrow iii)$ Suppose there exists a weakly precompact $K\subseteq A$ such that $T_f(K)$ is not an L-set. Then, there exists a $r>0$ and a weakly null sequence $(y_n)$ in $A$ such that $\sup_{x\in K}|fx(y_n)|\geq 2r$ for all $n\in\N$. Further, there exists a sequence $(x_n)$ in $K$ such that $|f(x_ny_n)|\geq r$ for all $n\in\N$. Since $K$ is weakly precompact, there is a weakly Cauchy subsequence $(x_{n_k})$. Hence, $f(x_{n_k}y_{n_k})\not\to 0$ while $(x_{n_k})$ is weakly Cauchy and $(y_{n_k})$ weakly null. Contradiction.

Assuming that $S_f$ does not map weakly precompact sets onto L-sets similarly leads to a contradiction.

$(iii\Rightarrow i)$ Let $(x_n),(y_n)$ be weakly null. $\{fx_n:n\in\N\}$ is an L-set by hypothesis, so $f(x_ny_n)\to 0$.
\end{proof}

\begin{lemma}\normalfont\label{lemma:cc}
Let $A$ be a Banach algebra and $f\in\dual{A}$. Then, the following are equivalent.
\begin{enumerate}
\item[i.]  If $(x_n)$ is weakly null and $(y_n)$ is bounded, then $f(x_ny_n)\to 0$ (resp. $f(y_nx_n)\to 0$)
\item[ii.] $T_f$ (resp. $S_f$) is completely continuous.
\item[iii.] $S_f$ (resp. $T_f$) maps bounded sets onto L-sets.
\end{enumerate}
\end{lemma}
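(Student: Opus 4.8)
The plan is to prove the cycle $(i \Rightarrow ii \Rightarrow iii \Rightarrow i)$, treating the ``resp.'' statement in parallel by the obvious symmetry (swapping the roles of $T_f$ and $S_f$, i.e. left and right multiplication). I will only write out the non-parenthetical version in detail.

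\emph{$(i \Rightarrow ii)$.} Complete continuity of $T_f$ means exactly that $T_f$ sends weakly null sequences to norm null sequences. So let $(x_n)$ be weakly null in $A$; I must show $\|fx_n\|_{\dual{A}} \to 0$. If not, after passing to a subsequence there is $r>0$ with $\|fx_n\| = \sup_{\|y\|\le 1} |f(x_n y)| > r$ for all $n$, so I can pick norm-bounded $(y_n)$ with $|f(x_n y_n)| \ge r$. But $(y_n)$ bounded and $(x_n)$ weakly null contradicts $(i)$. Hence $T_f$ is completely continuous.

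\emph{$(ii \Rightarrow iii)$.} I must show $S_f$ maps bounded sets to L-sets; equivalently, for every bounded $(y_n)$ in $A$ and every weakly null $(x_n)$ in $A$, $\,(S_f y_n)(x_n) = f(x_n y_n) \to 0$ (using the characterization of L-sets via $g_n(x_n)\to 0$ for $g_n \in E$, $x_n$ weakly null, recalled in Section~\ref{sec:prelim}). Now $(S_f y_n)(x_n) = af(x_n)$ with $a = y_n$, and $af(x) = f(xa)$, so indeed $(S_f y_n)(x_n) = f(x_n y_n) = (T_f x_n)(y_n)$. Since $T_f$ is completely continuous, $\|T_f x_n\| \to 0$, and $(y_n)$ is bounded, so $(T_f x_n)(y_n) \to 0$. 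Thus $\{S_f y_n : n\} \cup \{S_f y : \|y\|\le 1\}$ is an L-set; more carefully, an arbitrary bounded set $B$: if $S_f(B)$ were not an L-set there would be a weakly null $(x_n)$ and $y_n \in B$ with $|f(x_n y_n)| \ge r$, contradicting $\|T_f x_n\|\,\sup_{y\in B}\|y\| \to 0$.

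\emph{$(iii \Rightarrow i)$.} Assume $S_f$ maps bounded sets onto L-sets; let $(x_n)$ be weakly null and $(y_n)$ bounded. The set $E = \{S_f y_n : n \in \N\}$ is an L-set, and $(x_n)$ is weakly null, so $\lim_n \sup_{g \in E}|g(x_n)| = 0$; in particular $|(S_f y_n)(x_n)| = |f(x_n y_n)| \to 0$, which is $(i)$.

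The argument is essentially a bookkeeping exercise: the one point that needs care is the identity $(S_f y)(x) = (T_f x)(y) = f(xy)$ that lets complete continuity of $T_f$ be read off against bounded sets on the $S_f$ side, and the matching ``resp.'' identity $(T_f y)(x) = (S_f x)(y) = f(yx)$ for the parenthetical version. The only genuine subtlety — and the main thing to keep straight — is the direction of the module action, so that the correct operator ($T_f$ versus $S_f$) appears on each side; once that is pinned down, each implication reduces to the sequential reformulations of ``completely continuous'' and ``L-set'' already available from Section~\ref{sec:prelim}. I expect no real obstacle beyond this indexing care.
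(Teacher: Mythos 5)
Your proof is correct and uses essentially the same ingredients as the paper: the identity $f(x_ny_n)=(T_fx_n)(y_n)=(S_fy_n)(x_n)$, the bound $|f(x_ny_n)|\leq\|fx_n\|\,\|y_n\|$, and the sequential characterization of L-sets. The only difference is organizational (you run the cycle $i\Rightarrow ii\Rightarrow iii\Rightarrow i$, while the paper proves $ii$ and $iii$ each equivalent to $i$), which changes nothing of substance.
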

\begin{proof}
$(i\Rightarrow ii)$ Suppose $T_f$ is not completely continuous. Then, there exists a weakly null $(x_n)$ and $r>0$ such that $\|fx_n\|\geq 2r$ for all $n\in\N$. For each $n\in\N$, there exists $y_n\in A$, $\|y_n\|=1$ such that $|f(x_ny_n)|\geq r$. Hence, $f(x_ny_n)\not\to 0$.

$(ii\Rightarrow i)$ Since $\|fx_n\|\to 0$, then $|f(x_ny_n)|\leq \|fx_n\|\|y_n\|\to 0$.

$(i\Rightarrow iii)$ Suppose $S_f$ does not map bounded sets onto L-sets. Then, there exists a bounded $(y_n)_{n\in\N}$, a weakly null $(x_n)$ and $r>0$ such that $|y_nf(x_n)|\geq r$ for all $n\in\N$. Thus, $f(x_ny_n)\not\to 0$.

$(iii\Rightarrow i)$ Since $\{y_nf: n\in\N\}$ is an L-set by hypothesis, then \hbox{$f(x_ny_n) = y_nf(x_n)\to 0$.}
\end{proof}

\begin{lemma}\label{lemma:wpL_not_cc}\normalfont
Let $A$ be a Banach algebra and $f\in\wpL{A}$. The following are equivalent.
\begin{enumerate}
\item[i.] $f\notin\lcc{A}$ (resp. $f\notin\rcc{A}$). 
\item[ii.] $A$ contains an $\ell^1$ sequence $(w_n)$ and a weakly null sequence $(u_n)$ such that $f(u_mw_n) = \delta_{mn}$ (resp. $f(w_nu_m) = \delta_{mn}$) for all $m\geq n$.
\end{enumerate}
\end{lemma}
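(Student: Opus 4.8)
The plan is to establish the two implications separately, with essentially all the work going into $(i\Rightarrow ii)$. For $(ii\Rightarrow i)$ I would simply use that an $\ell^1$-sequence is bounded: if $(w_n)$ is an $\ell^1$-sequence and $(u_n)$ is weakly null with $f(u_mw_n)=\delta_{mn}$ for $m\ge n$, then putting $m=n$ gives $f(u_nw_n)=1\not\to 0$; by Lemma~\ref{lemma:cc} the operator $T_f$ is not completely continuous, i.e.\ $f\notin\lcc{A}$, and the ``resp.'' version is the same computation with the factors interchanged.

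For $(i\Rightarrow ii)$ I would start from $f\notin\lcc{A}$ and apply Lemma~\ref{lemma:cc} to obtain a weakly null sequence $(x_n)$ and a bounded sequence $(y_n)$ with $f(x_ny_n)\not\to 0$; after passing to a subsequence and rescaling each $y_n$ by a scalar I may assume $f(x_ny_n)=1$ for all $n$. This is where the hypothesis $f\in\wpL{A}$ is used: if $(y_n)$ had a weakly Cauchy subsequence $(y_{n_k})$, then Lemma~\ref{lemma:wp2L}(ii) applied to the weakly null $(x_{n_k})$ and the weakly Cauchy $(y_{n_k})$ would force $f(x_{n_k}y_{n_k})\to 0$, contradicting $f(x_ny_n)=1$; hence by Rosenthal's $\ell^1$-theorem some subsequence of $(y_n)$, together with the matching subsequence of $(x_n)$, is an $\ell^1$-sequence — essentially the $(w_n)$ we are after, up to a final relabelling — and in particular $\sup_n\|y_n\|<\infty$. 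I would also record the one-sided control $f(x_my_n)=(y_nf)(x_m)\to 0$ as $m\to\infty$ for each fixed $n$, which holds because $y_nf\in\dual{A}$ and $(x_m)$ is weakly null.

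The heart of the proof is then a gliding-hump/biorthogonalisation step. Inductively I would choose indices $n_1<n_2<\cdots$ and vectors $u_k$, each a small-norm perturbation of $x_{n_k}$ obtained by subtracting a suitable combination of $u_1,\dots,u_{k-1}$ from $x_{n_k}$ and then renormalising, so that $f(u_my_{n_j})=\delta_{mj}$ for all $1\le j\le m$ and $\|u_m-x_{n_m}\|\to 0$. Using the induction hypothesis (which gives $f(u_iy_{n_j})=\delta_{ij}$ whenever $j\le i$), the equations $f(u_m'y_{n_j})=0$ for $j<m$ unwind into a solvable triangular recursion $c_j=f(x_{n_m}y_{n_j})-\sum_{i<j}c_if(u_iy_{n_j})$; since the numbers $f(u_iy_{n_j})$ with $i<j\le m-1$ are already fixed while $f(x_{n_m}y_{n_j})\to 0$ as $n_m\to\infty$, a large enough choice of $n_m$ makes all $|c_j|$ as small as needed, so that $u_m$ stays close to $x_{n_m}$ and the renormalising scalar $f(u_m'y_{n_m})=1-\sum_{i<m}c_if(u_iy_{n_m})$ stays near $1$. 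Setting $w_j:=y_{n_j}$ then yields an $\ell^1$-sequence $(w_j)$ and a weakly null sequence $(u_m)$ (it differs from the weakly null $(x_{n_m})$ by a norm-null sequence) with $f(u_mw_n)=\delta_{mn}$ for all $m\ge n$, which is (ii).

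The ``resp.'' case runs identically with $S_f$ in place of $T_f$: from $f\notin\rcc{A}$ one gets weakly null $(x_n)$, bounded $(y_n)$ with $f(y_nx_n)=1$; the splitting $y_{n_k}=y_{n_1}+(y_{n_k}-y_{n_1})$ together with Lemma~\ref{lemma:wp2L}(i) rules out weakly Cauchy subsequences of $(y_n)$, one has $f(y_nx_m)=(fy_n)(x_m)\to 0$ as $m\to\infty$ for each fixed $n$, and the same triangular perturbation produces the required $(u_m)$ and $(w_n)$. I expect the perturbation step to be the only genuine obstacle: one can biorthogonalise $(x_n)$ against $(y_n)$ exactly only on the lower triangle $\{m\ge n\}$, because the cross terms $f(x_my_n)$ tend to $0$ when the \emph{first} index grows but not, in general, when the second one does — which is precisely why the conclusion is restricted to $m\ge n$ rather than all $m\ne n$ — and the routine-but-delicate part is organising the triangular recursion so that a single sufficiently large choice of each new $n_m$ simultaneously controls $\|u_m-x_{n_m}\|$ and the renormalising scalars.
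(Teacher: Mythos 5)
Your treatment of the $\lcc{A}$ case is correct and is essentially the paper's own argument: Lemma~\ref{lemma:cc} yields weakly null $(x_n)$ and bounded $(y_n)$ with $f(x_ny_n)=1$ after normalization, the hypothesis $f\in\wpL{A}$ together with Lemma~\ref{lemma:wp2L} rules out weakly Cauchy subsequences of $(y_n)$ so that Rosenthal's theorem gives an $\ell^1$ subsequence, and a gliding-hump perturbation of a further subsequence of $(x_n)$ produces the biorthogonality on the triangle $m\ge n$ while keeping $(u_n)$ weakly null. The only difference from the paper is cosmetic: you realize the perturbation by the explicit triangular elimination $u_m'=x_{n_m}-\sum_{j<m}c_ju_j$ with the recursion $c_j=f(x_{n_m}y_{n_j})-\sum_{i<j}c_if(u_iy_{n_j})$, whereas the paper instead picks a point of $\bigcap_{k<m}\ker(w_kf)$ close to $x_{n_m}$; your version makes the smallness of the perturbation and of $|1-f(u_m'y_{n_m})|$ visible directly from the recursion, which is fine.

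The one step that would fail as written is in the ``resp.''\ case. To rule out weakly Cauchy subsequences of $(y_n)$ there, you invoke the splitting $y_{n_k}=y_{n_1}+(y_{n_k}-y_{n_1})$ together with Lemma~\ref{lemma:wp2L}(i); but $(y_{n_k}-y_{n_1})_k$ is only weakly Cauchy, not weakly null (its weak limit need not be $0$, nor even lie in $A$), so part (i) does not apply to the term $f\bigl((y_{n_k}-y_{n_1})x_{n_k}\bigr)$. What you actually need is the mirror image of item (ii) of Lemma~\ref{lemma:wp2L} --- weakly Cauchy factor on the left, weakly null factor on the right --- which does hold for $f\in\wpL{A}$, but must be derived by the same diagonal device used inside the proof of Lemma~\ref{lemma:wp2L}: assuming $(y_k)$ is weakly Cauchy, $(x_k)$ weakly null and $|f(y_kx_k)|\ge r>0$, choose $m_k$ increasing with $|f(y_kx_{m_k})|\le 1/k$ (possible since $fy_k\in\dual{A}$ and $(x_m)$ is weakly null), and write $f(y_{m_k}x_{m_k})=f\bigl((y_{m_k}-y_k)x_{m_k}\bigr)+f(y_kx_{m_k})$; now $(y_{m_k}-y_k)$ \emph{is} weakly null, so part (i) applies and yields the contradiction. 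With this repair the symmetric case goes through exactly as you describe, since the rest of it (the cross terms $f(y_nx_m)=(fy_n)(x_m)\to0$ as $m\to\infty$, and the triangular recursion) is sound.
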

\begin{proof}
$(ii\Rightarrow i)$ Clear, since $E=\{w_n:n\in\N\}$ is a bounded set such that $T_f(E) = \{fw_n:n\in\N\}$ (resp. $S_f(E) = \{ w_nf :n\in\N\}$ ) is not an L-set. Thus, $f\notin\lcc{A}$ (resp. $f\notin\rcc{A}$).

$(i\Rightarrow ii)$ If $f\notin\lcc{A}$ (resp. $f\notin\rcc{A}$), then 
there exist $r>0$, a weakly null sequence $(x_n)$ and a bounded sequence $(y_n)$ such that $|f(x_ny_n)|\geq r$ (resp. $|f(y_nx_n)|\geq r$) by Lemma~\ref{lemma:cc}. $(y_n)$ cannot have a weakly Cauchy subsequence by Lemma~\ref{lemma:wp2L}, thus it has an $\ell^1$ subsequence by Rosenthal's theorem. Without loss of generality, we may assume $(y_n)$ is an $\ell^1$ sequence. Further, replacing $y_n$ by $\tilde{y_n} = y_n/\|y_n\|$ and $x_n$ by $\tilde{x}_n=|f(x_ny_n)|^{-1}\|y_n\| x_n$ (resp. $\tilde{x}_n = |f(x_ny_n)|^{-1}\|y_n\| x_n$), we may assume that $f(x_ny_n)=1$ (resp. $f(y_nx_n)=1$) for a weakly null sequence $(x_n)$ and a unit norm $\ell^1$ sequence $(y_n)$.


First, assume $f\notin\lcc{A}$ and $\|f\|\leq 1$. 
Let $c>0$ be a bound for the sequence $(\|x_n\|)$.
Proof is by induction. 
For the base step, let $u_1 = x_1$ and $w_1 = y_1$.
For the inductive step, assume that, for $k,l=1,\dots,n-1$,
there exist $x_{m_k}, u_k$ such that 
$\|x_{m_k} - u_k\|\leq (1+c)^{-k}$,
$f(u_ly_{n_k}) = \delta_{kl}$ for $l\geq k$. 
Here, $w_k=y_{m_k}$.

We will construct $u_n$ and $w_n$. Since $(x_n)$ is weakly null, there exists $m_n\geq m_{n-1}$ such that 
$$\max\{|f(x_{m_n}w_k)|: k=1,\dots,n-1 \} \leq \frac{(1+c)^{-(n+2)}}{1+ (1+c)^{-(n+1)}}.$$
There exists $\tilde{u}_n\in\bigcap_{k=1}^{n-1} ker(w_kf)$ such that 
$$\|x_{m_n}-\tilde{u}_n\|\leq \frac{(1+c)^{-(n+1)}}{ 1+ (1+c)^{-(n+1)}}.$$
Let $w_n = y_{m_n}$ and $u_n = \tilde{u}_n /f(\tilde{u}_nw_n)$.
Then, $f(u_nw_n) = 1$, $f(u_nw_k) = 0$ for $k=1,\dots,n-1$, and 
\begin{eqnarray*}
\|x_{m_n}-u_n\|
\leq \|x_{m_n} - \tilde{u}_n\| + \|\tilde{u}_n\| \frac{|1-f(\tilde{u}_nw_n)|}{|f(\tilde{u}_nw_n)|} 
\leq (1+c)\frac{\|x_{m_n} - \tilde{u}_n\|}{1-\|x_{m_n} - \tilde{u}_n\|} 
\leq (1+c)^{-n}.
\end{eqnarray*}
Consequently, $(u_n)$ is weakly null. 
Clearly, being a subsequence of an $\ell^1$ sequence, $(w_n)$ is an $\ell^1$ sequence. 

The other case, $f\notin\rcc{A}$, is handled similarly.
\end{proof}

\begin{remark}\normalfont
If not only the sequence $(u_n)$ in Lemma~\ref{lemma:wpL_not_cc} is weakly null, but also $\sum u_n$ is a weakly unconditionally convergent series, then $(w_n)$ is a complemented $\ell^1$ sequence. 
In fact, let $W$ be the closed linear span of $(w_n)$, which is isomorphic to $\ell^1$. 
Let $S:A\to W$ be defined by $Sx = \sum_{n\in\N} f(u_nx)w_n$ (resp. $Sx = \sum_{n\in\N} f(xu_n)w_n$). Then, $S$ is a surjective bounded linear operator, and thus, $A$ contains a complemented copy of $\ell^1$, e.g., by Theorem 5 in page 72 of \cite{gtm092}.

\end{remark}

\begin{lemma}\label{lemma:TfLa}\normalfont
Let $A$ be a Banach algebra, $f\in\dual{A}$ and $a,b\in A$.
\begin{enumerate}[label=\alph*.]
\item If $f\in\wpL{A}$ and $L_a$ (resp. $R_a$) is a weakly precompact operator, then $fa\in\rcc{A}$ (resp. $af\in\lcc{A}$).

\item If $f\in\lcc{A}$ (resp. $f\in\rcc{A}$) and $L_a$ (resp. $R_a$) is weakly precompact, then $fa\in\ap{A}$ (resp. $af\in\ap{A}$).

\item If $f\in\wpL{A}$ and $L_a$, $R_b$ are weakly precompact operators, then $bfa\in\ap{A}$.
\end{enumerate}
\end{lemma}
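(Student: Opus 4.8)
The plan is to prove the three parts in order, deducing (c) by feeding (a) into (b). Throughout I use the sequential reformulations of $\wpL{A}$, $\lcc{A}$ and $\rcc{A}$ furnished by Lemmas~\ref{lemma:wp2L} and~\ref{lemma:cc}, and the fact that a weakly precompact operator sends bounded sets to weakly precompact sets.

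For part (a): to show $fa\in\rcc{A}$ it suffices, by Lemma~\ref{lemma:cc}, to check that $(fa)(y_nx_n)=f(ay_nx_n)\to 0$ whenever $(x_n)$ is weakly null and $(y_n)$ is bounded. Since $L_a$ is weakly precompact, $K:=\{ay_n:n\in\N\}$ is weakly precompact, so $f\in\wpL{A}$ forces $T_f(K)=\{f(ay_n):n\in\N\}$ to be an L-set in $\dual{A}$; testing the sequence $\bigl(f(ay_n)\bigr)_n$ of this L-set against the weakly null sequence $(x_n)$ then gives $f(ay_nx_n)\to 0$. The statement for $af$ and $R_a$ is the mirror image, using instead that $S_f$ maps weakly precompact sets onto L-sets (item iii' of Lemma~\ref{lemma:wp2L}).

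For part (b): the crux is the factorizations $T_{fa}=T_f\circ L_a$ and $S_{af}=S_f\circ R_a$, immediate from the module identities, so it is enough to show that a completely continuous operator composed after a weakly precompact operator is compact. For this I would first record the elementary fact that a completely continuous operator $U$ into a Banach space carries every weakly precompact set $K$ onto a relatively norm compact set: any sequence in $K$ has a weakly Cauchy subsequence $(k_j)$, so $k_{p_i}-k_{q_i}$ is weakly null for any $p_i,q_i\to\infty$, whence $U(k_{p_i}-k_{q_i})\to 0$ in norm and $(Uk_j)$ is norm Cauchy, hence convergent. Applying this with $U=T_f$ (since $f\in\lcc{A}$ means $T_f$ is completely continuous) and the weakly precompact operator $L_a$ shows $T_{fa}=T_f\circ L_a$ takes bounded sets to relatively compact sets, i.e. $T_{fa}$ is compact and $fa\in\ap{A}$; symmetrically, $f\in\rcc{A}$ and $R_a$ weakly precompact give $S_{af}=S_f\circ R_a$ compact, hence $T_{af}$ compact, i.e. $af\in\ap{A}$.

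Part (c) then follows at once: by (a), $f\in\wpL{A}$ together with $L_a$ weakly precompact gives $fa\in\rcc{A}$, and applying (b) to the functional $fa$ and the element $b$ (whose right multiplication $R_b$ is weakly precompact) yields $b(fa)=bfa\in\ap{A}$, where $b(fa)=(bf)a$ by the bimodule structure of $\dual{A}$. I expect the main obstacle to be the sub-lemma in part (b) — upgrading weak precompactness to relative norm compactness under a completely continuous operator — together with the bookkeeping needed to verify the operator identities $T_{fa}=T_f\circ L_a$, $S_{af}=S_f\circ R_a$ and the bimodule identity $b(fa)=(bf)a$; once these are settled, everything else is a direct application of the characterizations already proved.
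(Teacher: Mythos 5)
Your proposal is correct and follows essentially the same route as the paper: both rest on the factorizations $T_{fa}=T_f\circ L_a$ and $S_{af}=S_f\circ R_a$, apply the characterizations in Lemmas~\ref{lemma:wp2L} and~\ref{lemma:cc}, and obtain (c) by chaining (a) into (b). The only difference is that you make explicit the sub-lemma that a completely continuous operator sends weakly precompact sets to relatively norm compact sets, a fact the paper's proof of (b) uses implicitly; your argument for it is correct.
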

\begin{proof}
First, $T_{fa} = T_fL_a$ and $S_{af} = S_fR_a$.

a. If $B$ is a bounded subset, then $T_{fa}(B) = T_f(L_a(B))$ is an L-set. Thus, $fa\in\rcc{A}$ by Lemma~\ref{lemma:cc}. 
Similarly, $S_{af}$ maps bounded sets onto L-sets, so $af\in\lcc{A}$ by Lemma~\ref{lemma:cc}.

b. If $B$ is a bounded subset, then $T_{fa}(B) = T_f(L_a(B))$ is relatively compact, so $fa\in\ap{A}$. 
Similarly, $S_{af} = S_fR_a$ is a compact operator, i.e., $af\in\ap{A}$.

c. a direct consequence of (a) and (b).
\end{proof}

\begin{theorem}\label{thm:no-l1-wp2L}\normalfont
Let $A$ be a Banach algebra.
If $A$ contains no copy of $\ell^1$, then $\wpL{A} = \cc{A} = \ap{A}$. 
\end{theorem}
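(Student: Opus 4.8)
The plan is to reduce the whole statement to the single inclusion $\wpL{A}\subseteq\ap{A}$. Indeed, the reverse chain $\ap{A}\subseteq\cc{A}\subseteq\wpL{A}$ holds for \emph{every} Banach algebra: the first inclusion is recorded right after Definition~\ref{def:wp2L}, and the second follows since $\cc{A}=\lcc{A}\cap\rcc{A}\subseteq\lcc{A}\cup\rcc{A}\subseteq\wpL{A}$, again as noted after Definition~\ref{def:wp2L}. So once $\wpL{A}\subseteq\ap{A}$ is established under the hypothesis, all three spaces coincide.

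To prove $\wpL{A}\subseteq\ap{A}$, I would fix $f\in\wpL{A}$ and show that $T_f$ is compact. By Lemma~\ref{lemma:wp2L}, $T_f$ maps weakly precompact subsets of $A$ onto L-sets in $\dual{A}$. The hypothesis now enters on both sides. On the domain side: since $A$ contains no copy of $\ell^1$, Rosenthal's $\ell^1$-theorem shows that every bounded sequence in $A$ has a weakly Cauchy subsequence (an $\ell^1$-subsequence is impossible, its closed span being a copy of $\ell^1$), so every bounded subset of $A$ — in particular the closed unit ball of $A$ — is weakly precompact. On the range side: as recorded in Section~\ref{sec:prelim} (following \cite{Emmanuele86}), the no-$\ell^1$ hypothesis forces every L-set contained in $\dual{A}$ to be relatively compact. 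Combining these, $T_f$ applied to the closed unit ball of $A$ yields an L-set, hence a relatively compact set, so $T_f$ is a compact operator and $f\in\ap{A}$.

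I do not expect a genuine obstacle: the content is carried entirely by the two cited equivalences (``no copy of $\ell^1$'' versus ``bounded subsets are weakly precompact'', via Rosenthal, and ``no copy of $\ell^1$'' versus ``L-sets in $\dual{A}$ are relatively compact'') together with the already-proved Lemma~\ref{lemma:wp2L}. The only point worth stating with care is that the hypothesis is used twice — once to upgrade ``bounded'' to ``weakly precompact'' on the input, and once to upgrade ``L-set'' to ``relatively compact'' on the output. As a variant that stays closer to the lemmas of this section, one could instead combine Lemma~\ref{lemma:wpL_not_cc} (which, under the hypothesis, gives $\wpL{A}\subseteq\lcc{A}\cap\rcc{A}=\cc{A}$, since an $f\in\wpL{A}\setminus\lcc{A}$ would produce an $\ell^1$ sequence in $A$) with Lemma~\ref{lemma:cc} and the relative compactness of L-sets to conclude that, for each $f\in\cc{A}$, the operator $S_f$ — and hence, by duality, $T_f$ — is compact.
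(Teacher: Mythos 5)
Your proposal is correct, but its main route differs from the paper's proof in a worthwhile way. The paper argues in two steps: first $\wpL{A}=\cc{A}$ via Lemma~\ref{lemma:wpL_not_cc} (the contrapositive: an $f\in\wpL{A}\setminus\lcc{A}$ yields, by an inductive construction, an $\ell^1$ sequence in $A$, which the hypothesis forbids), and then $\cc{A}\subseteq\ap{A}$ from the standard fact that a completely continuous operator on a space containing no copy of $\ell^1$ is compact (via Rosenthal: bounded sequences have weakly Cauchy subsequences, which completely continuous operators send to norm-Cauchy sequences). You instead collapse everything into the single inclusion $\wpL{A}\subseteq\ap{A}$: Rosenthal upgrades the closed unit ball to a weakly precompact set, so $T_f(B_A)$ is an L-set, and Emmanuele's characterization (quoted in Section~\ref{sec:prelim}: no copy of $\ell^1$ iff every L-set in $\dual{A}$ is relatively compact) upgrades that L-set to a relatively compact set, giving $T_f$ compact; the reverse chain $\ap{A}\subseteq\cc{A}\subseteq\wpL{A}$ is the general one recorded after Definition~\ref{def:wp2L}. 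Your argument is shorter and makes the theorem independent of the technical Lemma~\ref{lemma:wpL_not_cc}, at the price of invoking Emmanuele's theorem, a nontrivial external result (whereas the paper's second step is an elementary folklore fact and its first step is self-contained via the lemma). Your closing variant--Lemma~\ref{lemma:wpL_not_cc} to get $\wpL{A}\subseteq\cc{A}$, then Lemma~\ref{lemma:cc} plus relative compactness of L-sets and Schauder duality to get compactness of $T_f$--is essentially the paper's structure with Emmanuele's theorem substituted for the folklore fact, and is also correct.
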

\begin{proof}
$\wpL{A} = \cc{A}$ by Lemma~\ref{lemma:wpL_not_cc}.
Second, let $f\in\cc{A}$. Every completely continuous operator from $A$ is compact when $A$ does not contain a copy of $\ell^1$. Thus, $T_f$ is compact, i.e., $f\in\ap{A}$.
%
%
%
\end{proof}

\begin{definition}\label{def:jwsc}\normalfont
Let $A$ be a Banach algebra. The multiplication of $A$ is
{\it jointly weakly sequentially continuous (jwsc)} if whenever $(x_n),(y_n)$ are weakly null sequences, then so is $(x_ny_n)$. 

We say that the multiplication is {\it l-strong jwsc} (resp. {\it r-strong jwsc}) if $(x_ny_n)$ (resp. $(y_nx_n)$) is weakly null whenever $(x_n)$ is a weakly null sequence, and $(y_n)$ is a bounded sequence. The multiplication is {\it strong jwsc} if it is both l-strong and r-strong jwsc.
\end{definition}
The space of continuous functions $C(K)$ on a compact Hausdorff topological space is an example of a Banach algebra with strong jwsc multiplication. The group algebra $L^1(\R)$ is an example with jwsc multiplication, which is neither l-strong jwsc nor r-strong jwsc.

\begin{theorem}\label{thm1}\normalfont
Let $A$ be a Banach algebra. Then, 
\begin{enumerate}[label=\alph*.]
\item $\dual{A} = \wpL{A}$ if and only if $A$ has jwsc multiplication.
\item $\dual{A} = \lcc{A}$ (resp. $\dual{A} = \rcc{A}$) if and only if $A$ has l-strong (resp. r-strong) jwsc multiplication.
\item If $A$ has DPP, then $\dual{A} = \wpL{A}$. 
If $A$ is also Arens regular, then $\dual{A} = \cc{A}$.
\item If $A$ contains no copy of $\ell^1$ and has jwsc multiplication, then $\dual{A} = \ap{A}$.
\item If $\dual{A}$ has Schur property, then $\dual{A} = \ap{A}$. 
\item If $A$ has Schur property, then $\dual{A}=\cc{A}$.
\end{enumerate}
\end{theorem}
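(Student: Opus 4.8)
\textbf{Proof proposal for Theorem~\ref{thm1}(f).} The plan is to reduce everything to Lemma~\ref{lemma:cc}. Since $\cc{A}\subseteq\dual{A}$ is automatic, it suffices to show $\dual{A}\subseteq\cc{A}=\lcc{A}\cap\rcc{A}$, and by Lemma~\ref{lemma:cc} this amounts to verifying, for each fixed $f\in\dual{A}$, condition (i) of that lemma on both sides: $f(x_ny_n)\to 0$ and $f(y_nx_n)\to 0$ whenever $(x_n)$ is weakly null and $(y_n)$ is bounded in $A$.

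First I would fix $f\in\dual{A}$, a weakly null sequence $(x_n)$, and a bounded sequence $(y_n)$. The key observation is that the Schur property collapses the hypothesis on $(x_n)$: a weakly null sequence in $A$ is norm null, so $\|x_n\|\to 0$. Then submultiplicativity of the algebra norm gives $\|x_ny_n\|\le\|x_n\|\,\|y_n\|\to 0$ and likewise $\|y_nx_n\|\le\|y_n\|\,\|x_n\|\to 0$, whence $f(x_ny_n)\to 0$ and $f(y_nx_n)\to 0$. By the equivalence $(i)\Leftrightarrow(ii)$ of Lemma~\ref{lemma:cc} (applied to $T_f$ and then to $S_f$), both $T_f$ and $S_f$ are completely continuous, i.e.\ $f\in\lcc{A}\cap\rcc{A}=\cc{A}$. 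As $f$ was arbitrary, $\dual{A}=\cc{A}$.

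I do not expect any real obstacle: the Schur property trivializes the weak-null condition and the norm inequality finishes the argument. It is perhaps worth a sentence in the write-up explaining why one does not obtain this from parts~(c) or~(d) instead: the Schur property implies DPP but not Arens regularity, so (c) only yields $\dual{A}=\wpL{A}$ here, and $\ell^1$ (with, say, convolution) has the Schur property yet contains a copy of $\ell^1$, so (d) does not apply either. Hence the direct route through Lemma~\ref{lemma:cc} is the natural one, and it is also the most economical.
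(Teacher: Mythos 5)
Your argument for part (f) is correct and is essentially the paper's own: the paper disposes of (f) in one line by noting that the Schur property makes every bounded linear operator on $A$ (in particular $T_f$ and $S_f$) completely continuous, which is exactly your observation that weakly null sequences are norm null, spelled out through condition (i) of Lemma~\ref{lemma:cc} and submultiplicativity. Note only that the stated theorem has six parts and your proposal addresses (f) alone; for that part there is nothing to add.
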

\begin{proof}
a. Immediate by Lemma~\ref{lemma:wp2L}. \hspace{5mm}
b. Immediate by Lemma~\ref{lemma:cc}.

c. It is well-known that if $A$ has DPP, then $A$ has jwsc multiplication, see e.g., Proposition 2.34 in \cite{Dineen99}. In fact, let $f\in\dual{A}$ and $(x_n),(y_n)$ be two weakly null sequences in $A$. Since $(fx_n)$ is a weakly null sequence in $\dual{A}$, then $f(x_ny_n) = fx_n(y_n)\to 0$ by the DPP. Thus, $\dual{A} = \wpL{A}$ by (a).

Second, $\dual{A}=\wap{A}$ by Arens regularity and $\wap{A}\subseteq\cc{A}$ by DPP. Thus, $\dual{A} = \cc{A}$.

d. By (a) and Theorem~\ref{thm:no-l1-wp2L}.

e. $\dual{A}$ has Schur property if and only if $A$ has DPP and contains no copy of $\ell^1$ (see Theorem 3 in \cite{Graves80-ch2}). Thus, $\ap{A} = \dual{A}$ by (c) and (d).

f. Clearly, every bounded linear operator defined on $A$ is completely continuous.
\end{proof}

It is worth to note that there are commutative Banach algebras, which have DPP, but $\dual{A}\neq\cc{A}$. In fact, if $G$ is a non-discrete non-compact locally compact Abelian group, then the group algebra $L^1(G)$ has the DPP, and there exists $g\in C_b(G)$ for which the map $T_g:L^1(G)\to L^{\infty}(G)$, $T_g(x) = g*x$ is not completely continuous \cite{CrombezGovaerts84}. 

On the other hand, there are commutative Banach algebras, which does not have DPP, but $\dual{A}=\ap{A}$. In fact, $\ell^p$ ($1<p<\infty$) is a Banach algebra with pointwise operations. 
It is not difficult to show that the pointwise multiplication is jwsc. Being reflexive, $\ell^p$ does not have DPP and contains no copy of $\ell^1$. Thus, $\dual{A}=\ap{A}$ by (d) of Theorem~\ref{thm1}.

It is clear that (e) of Theorem~\ref{thm1} does not have a converse for $\ell^p$. However, if $A$ is a C*-algebra, then $\dual{A}=\ap{A}$ if and only if $\dual{A}$ have Schur property by Theorem 3.6 in \cite{UlgerLau93}. 
We provide an analogous statement below.
\begin{corollary}\label{thm:Cstar}\normalfont
Let $A$ be a C*-algebra. Then, the following are equivalent.
\begin{enumerate}
\item[i.] $\dual{A} = \cc{A}$. 
\item[ii.] $\dual{A} = \wpL{A}$.
\item[iii.] $A$ has DPP.
\end{enumerate}
\end{corollary}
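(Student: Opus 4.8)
The plan is to prove the chain $(iii)\Rightarrow(ii)\Rightarrow(i)\Rightarrow(iii)$, using the general results of Section~\ref{sec:wp2L} together with the special structural features of C*-algebras. The implication $(iii)\Rightarrow(ii)$ is already available: it is exactly part (c) of Theorem~\ref{thm1}, since DPP alone yields $\dual{A}=\wpL{A}$. The implication $(ii)\Rightarrow(i)$ will be the step that needs work, and $(i)\Rightarrow(iii)$ should follow by citing the known equivalence (Theorem~3.6 in \cite{UlgerLau93}) that for a C*-algebra, $\dual{A}=\ap{A}$ iff $\dual{A}$ has the Schur property iff $A$ has DPP — so it suffices to deduce $\dual{A}=\ap{A}$ from $\dual{A}=\cc{A}$; alternatively one deduces DPP directly.

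For $(ii)\Rightarrow(i)$: assume $\dual{A}=\wpL{A}$ but, for contradiction, that some $f\in\dual{A}$ fails to lie in $\cc{A}$, say $f\notin\lcc{A}$ (the case $f\notin\rcc{A}$ is symmetric, using the C*-involution if desired). By Lemma~\ref{lemma:wpL_not_cc}, $A$ then contains an $\ell^1$-sequence $(w_n)$ and a weakly null sequence $(u_n)$ with $f(u_mw_n)=\delta_{mn}$ for all $m\ge n$. The goal is to manufacture from this data a genuine isomorphic copy of $\ell^1$ sitting inside $A$ in a way incompatible with $\dual{A}=\wpL{A}$; more precisely, I want to contradict $\dual{A}=\wpL{A}$ by exhibiting a weakly precompact (indeed bounded) set whose image under some $T_g$ fails to be an L-set, \emph{and} simultaneously show that in a C*-algebra the hypothesis $\dual{A}=\wpL{A}$ forces a property that rules this out. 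The cleanest route: show $\dual{A}=\wpL{A}$ implies $A$ has DPP directly. For a C*-algebra, a standard fact is that $A$ has DPP iff $A$ has no infinite-dimensional reflexive subspace / iff every weakly compact operator on $A$ is completely continuous, and in a C*-algebra weak compactness and related phenomena are governed by the absence of copies of $c_0$ or $\ell^1$ in hereditary subalgebras. So I would argue: if $A$ lacks DPP, then (by the C*-structure theory, e.g. Chu's or the Bunce–Kadison–Pedersen circle of results, or directly via \cite{UlgerLau93}) $A$ contains a subalgebra or corner isomorphic to a C*-algebra with non-DPP behaviour where one can explicitly build weakly null sequences $(x_n),(y_n)$ with $f(x_ny_n)\not\to 0$ for a suitable $f$ — contradicting $\dual{A}=\wpL{A}$ via Lemma~\ref{lemma:wp2L}(i).

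The main obstacle is precisely this last point: connecting the abstract failure data of Lemma~\ref{lemma:wpL_not_cc} (an $\ell^1$-sequence $(w_n)$ paired with a weakly null $(u_n)$) to a failure of jwsc multiplication, since jwsc requires \emph{both} factor sequences to be weakly null whereas the $\ell^1$-sequence $(w_n)$ is by construction \emph{not} weakly null. The resolution I anticipate is that in a C*-algebra one can "localize": the relevant products $u_mw_n$ live in a separable subalgebra $B$, and one passes to a quotient or a hereditary subalgebra in which the images of $(w_n)$ become weakly null while the functional $f$ (restricted appropriately) still separates them — using that in C*-algebras, $\ell^1$-sequences and the structure of the bidual are tightly controlled (the bidual is a von Neumann algebra, Arens regularity holds, so $\dual{A}=\wap{A}$ always). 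Indeed, Arens regularity gives $\dual{A}=\wap{A}$ for free, and then $\wpL{A}=\dual{A}$ together with $\wap{A}=\dual{A}$ should, via Lemma~\ref{lemma:TfLa} or a direct bidual computation, push every $T_f$ down to a completely continuous operator — the key being that a weakly compact operator into $\dual A$ whose range-set is always an L-set is completely continuous. So the endgame is: $\dual A=\wap A$ (Arens regularity) $+$ $\dual A=\wpL A$ (hypothesis) $\Rightarrow$ for each $f$, $T_f$ is weakly compact and maps weakly precompact sets to L-sets $\Rightarrow$ $T_f$ is completely continuous (since a weakly compact operator maps weakly Cauchy sequences to weakly convergent ones, and an L-set image of a weakly convergent-difference sequence forces norm convergence) $\Rightarrow$ $\dual A=\cc A$, and then DPP follows because $\dual A=\cc A$ in a C*-algebra is known to be equivalent to DPP. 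I would double-check whether the step "$T_f$ weakly compact $+$ image of bounded weakly Cauchy sequences are L-sets $\Rightarrow$ $T_f$ completely continuous" needs boundedly-complete-type hypotheses or goes through verbatim; if it is delicate, I would instead invoke \cite{UlgerLau93} Theorem~3.6 as a black box for the $(i)\Leftrightarrow(iii)$ equivalence and only prove $(ii)\Rightarrow(iii)$ the same way.
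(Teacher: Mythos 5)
Your cycle $(iii)\Rightarrow(ii)$ is fine (it is Theorem~\ref{thm1}(c)), but the two steps that carry all the weight are not actually proved, and the paper closes them quite differently. The missing idea for your $(ii)\Rightarrow(i)$/$(ii)\Rightarrow(iii)$ step is short: $\dual{A}=\wpL{A}$ gives jwsc multiplication by Theorem~\ref{thm1}(a); apply jwsc to the \emph{pair} $(x_n),(x_n^{*})$, both weakly null, to conclude $x_nx_n^{*}\to 0$ weakly for every weakly null $(x_n)$; by Theorem~1 of \cite{ChuIochum90} this is equivalent to $A$ having DPP. That is the one genuinely C*-specific input, and it completely bypasses the obstacle you (correctly) identified, namely that the $\ell^1$-sequence produced by Lemma~\ref{lemma:wpL_not_cc} is not weakly null; no localization to hereditary subalgebras or quotients is needed. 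With DPP in hand, $(iii)\Rightarrow(i)$ is immediate: Arens regularity gives $\dual{A}=\wap{A}$, and DPP makes every weakly compact operator on $A$ completely continuous, so $\wap{A}\subseteq\cc{A}$. Your proposed ``endgame'' tries to get this without first establishing DPP, via the claim that a weakly compact $T_f$ whose images of weakly precompact sets are L-sets must be completely continuous; as stated this is unproven and the heuristic behind it is unsound: a weakly null sequence in $\dual{A}$ that forms an L-set need not be norm null (take $A=\ell^1$, where by the Schur property every bounded subset of $\ell^{\infty}$ is an L-set, yet the unit vectors of $c_0\subseteq\ell^{\infty}$ are weakly null and not norm null). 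Passing from weak compactness to complete continuity is exactly what DPP buys, so without it your argument is circular.

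Your fallback for $(i)\Rightarrow(iii)$ is also flawed. Theorem~3.6 of \cite{UlgerLau93} says $\dual{A}=\ap{A}$ iff $\dual{A}$ has the Schur property; this is \emph{not} equivalent to $A$ having DPP (Schur dual is DPP plus no copy of $\ell^1$), and the deduction ``$\dual{A}=\cc{A}\Rightarrow\dual{A}=\ap{A}$'' you hope for is false: $A=C[0,1]$ has DPP, hence $\dual{A}=\cc{A}$ by this very corollary, yet $\dual{A}=M[0,1]$ is not Schur, so $\dual{A}\neq\ap{A}$. The harmless fix is the paper's ordering: prove $(i)\Rightarrow(ii)$ trivially from $\cc{A}\subseteq\wpL{A}$, then $(ii)\Rightarrow(iii)$ by the Chu--Iochum argument above, then $(iii)\Rightarrow(i)$ by Arens regularity plus DPP.
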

\begin{proof}
%
$(i\Rightarrow ii)$ clear since $\cc{A}\subseteq\wpL{A}$.

$(ii\Rightarrow iii)$ if $\dual{A}=\wpL{A}$, then $A$ has jwsc multiplication by Theorem~\ref{thm1}. 
Particularly, $(x_nx_n^{*})$ is a weakly null sequence whenever $(x_n)$ is weakly null. Equivalently, $A$ has DPP by Theorem 1 in \cite{ChuIochum90}.

$(iii\Rightarrow i)$ C*-algebras are Arens regular, so $\dual{A}=\wap{A}$. 
Every weakly compact operator defined on $A$ is completely continuous by DPP. Thus, $\wap{A}\subseteq\cc{A}$ and so $\dual{A}=\cc{A}$.
\end{proof}

\begin{theorem}\label{thm:jwsc-idealinbidual}\normalfont
Let $A$ be a Banach algebra, which is a left (resp. right) ideal in $\ddual{A}$. If the multiplication is jwsc, then 
\begin{enumerate}[label=\alph*.]
\item $\dual{A}A\subseteq\rcc{A}$ (resp. $A\dual{A}\subseteq\lcc{A}$).
\item $A\dual{A}A\subseteq\ap{A}$.
\end{enumerate}
\end{theorem}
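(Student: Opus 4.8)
The plan is to convert the bidual-ideal hypothesis into weak compactness of one-sided multiplication operators and then apply Lemma~\ref{lemma:TfLa} twice.

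First step: recall the standard fact that \emph{$A$ is a left (resp.\ right) ideal in $\ddual{A}$ if and only if, for each $a\in A$, the corresponding one-sided multiplication operator on $A$ is weakly compact.} The argument: for fixed $a\in A$ the map $\mu\mapsto\widehat a\,\square\,\mu$ (resp.\ $\mu\mapsto\mu\,\square\,\widehat a$), formed with the Arens product $\square$ that is weak*-continuous in the slot occupied by $\widehat a$, is weak*-to-weak* continuous; it restricts on the weak*-dense subspace $\widehat A$ to a one-sided multiplication operator of $A$, so it coincides with the bitranspose of that operator. Gantmacher's theorem then says the operator is weakly compact exactly when its bitranspose maps $\ddual{A}$ into $\widehat A$, which is precisely the statement that $A$ absorbs the corresponding multiplication. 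Keeping straight which Arens product, which variable, and which side correspond to one another is, I expect, the only genuinely delicate point.

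Second step: the jwsc hypothesis gives, via Theorem~\ref{thm1}(a), that $\dual{A}=\wpL{A}$, so every $f\in\dual{A}$ lies in $\wpL{A}$. For part~(a), fix $f\in\dual{A}=\wpL{A}$ and $a\in A$; by the first step the relevant one-sided multiplication ($L_a$, resp.\ $R_a$) is weakly compact, hence weakly precompact, and Lemma~\ref{lemma:TfLa}(a) gives $fa\in\rcc{A}$ (resp.\ $af\in\lcc{A}$). Letting $f$ and $a$ vary yields $\dual{A}A\subseteq\rcc{A}$ (resp.\ $A\dual{A}\subseteq\lcc{A}$).

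Third step (part (b)): iterate. From (a), any $g$ of the form $fa$ lies in $\dual{A}A\subseteq\rcc{A}$; for $b\in A$ the relevant one-sided multiplication by $b$ is again weakly compact, hence weakly precompact, so Lemma~\ref{lemma:TfLa}(b) gives $b g\in\ap{A}$, and by associativity of the $A$-bimodule structure on $\dual{A}$ we have $b g=b(fa)=bfa$; hence $A\dual{A}A\subseteq\ap{A}$. Alternatively, Lemma~\ref{lemma:TfLa}(c) gives $bfa\in\ap{A}$ in one shot once both $L_a$ and $R_b$ are available as weakly precompact operators. The substantive work lies entirely in the first step; parts (a) and (b) are then routine, resting on the identities $T_{fa}=T_fL_a$ and $S_{af}=S_fR_a$ already exploited in Lemma~\ref{lemma:TfLa}, so the main obstacle is the Arens-product/Gantmacher translation rather than any new estimate.
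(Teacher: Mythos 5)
Your proposal is correct and takes essentially the same route as the paper: the paper likewise invokes the (cited from Palmer) equivalence between the bidual-ideal hypothesis and weak compactness of the one-sided multiplication operators, then combines $\dual{A}=\wpL{A}$ from Theorem~\ref{thm1}(a) with Lemma~\ref{lemma:TfLa}(a)--(c) to get both parts. The only difference is that you sketch the Gantmacher/Arens-product argument for the cited fact, whereas the paper simply quotes it.
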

\begin{proof}
It is well-known that $A$ is a left (resp. right) ideal in $\ddual{A}$ if and only if $L_a$ (resp. $R_a$) is weakly compact for all $a\in A$, see e.g., \cite{Palmer1}. 
Second, $\dual{A}=\wpL{A}$ by Theorem~\ref{thm1}. 

a. $fa\in\rcc{A}$ (resp. $af\in\lcc{A}$) for all $f\in\dual{A}$ and $a\in A$ by Lemma~\ref{lemma:TfLa}. Hence, $\dual{A}A\subseteq\rcc{A}$ (resp. $A\dual{A}\subseteq\lcc{A}$). 

b. $bfa\in\ap{A}$ for all $f\in\dual{A}$ and $a,b\in A$ by Lemma~\ref{lemma:TfLa}. Hence, the result.
\end{proof}

It is clear from the proof that the result of Theorem~\ref{thm:jwsc-idealinbidual} persists if we assumed that $L_a$ (resp. $R_a$) is weakly precompact for all $a\in A$, instead of $A\ddual{A}\subseteq A$ (resp. $\ddual{A}A\subseteq A$). The same is valid for the next corollary.

\begin{corollary}\normalfont 
Let $A$ be a Banach algebra 
with a bounded left (resp. right) approximate identity, 
which is a right (resp. left) ideal in $\ddual{A}$. 
If the multiplication of $A$ is jwsc, then $\wap{A}\subseteq\lcc{A}$ (resp. $\wap{A}\subseteq\rcc{A}$).
If, additionally, $A$ is Arens regular, then $\dual{A}=\lcc{A}$ (resp. $\dual{A}=\rcc{A}$).
\end{corollary}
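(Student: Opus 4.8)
The plan is to deduce the statement from the inclusion $A\dual{A}\subseteq\lcc{A}$, which is already available from Theorem~\ref{thm:jwsc-idealinbidual}, together with the fact that a bounded left approximate identity, in the presence of the weak compactness of the multiplication operator, pushes every functional in $\wap{A}$ into the norm closure of $A\dual{A}$.

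First I would collect the two ingredients. Because the multiplication is jwsc, Theorem~\ref{thm1} gives $\dual{A}=\wpL{A}$; and because $A$ is a right ideal in $\ddual{A}$ — equivalently, by the remark following Theorem~\ref{thm:jwsc-idealinbidual}, because every $R_a$ is weakly precompact — Theorem~\ref{thm:jwsc-idealinbidual} yields $A\dual{A}\subseteq\lcc{A}$. Since $f\mapsto T_f$ is a bounded linear map of $\dual{A}$ into the space of bounded operators and the completely continuous operators form a norm-closed subspace, $\lcc{A}$ is a norm-closed subspace of $\dual{A}$; hence $\overline{A\dual{A}}\subseteq\lcc{A}$, and it is enough to show $\wap{A}\subseteq\overline{A\dual{A}}$.

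Fix $f\in\wap{A}$ and let $(e_\alpha)$ be the bounded left approximate identity, with $\|e_\alpha\|\le M$ for all $\alpha$. For each $\alpha$ we have $e_\alpha f=S_f(e_\alpha)\in A\dual{A}$, and $e_\alpha f\to f$ in the weak* topology of $\dual{A}$, since $(e_\alpha f)(x)=f(xe_\alpha)\to f(x)$ for every $x\in A$ by the defining property of the approximate identity. As $f\in\wap{A}$, the operator $T_f$ — hence also $S_f$ — is weakly compact, so $S_f$ sends the bounded net $(e_\alpha)$ into a relatively weakly compact subset of $\dual{A}$; consequently $(e_\alpha f)$ has a subnet converging weakly, a fortiori weak*, to some $g\in\dual{A}$, and uniqueness of weak* limits forces $g=f$. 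Thus $f$ lies in the weak closure of $A\dual{A}$, and, that set being a linear subspace, this closure coincides with the norm closure; so $f\in\overline{A\dual{A}}\subseteq\lcc{A}$. This proves $\wap{A}\subseteq\lcc{A}$. If in addition $A$ is Arens regular then $\dual{A}=\wap{A}$, whence $\dual{A}=\wap{A}\subseteq\lcc{A}\subseteq\dual{A}$, i.e.\ $\dual{A}=\lcc{A}$. The right-handed assertion is proved by the same argument, interchanging throughout left with right, $T_f$ with $S_f$, and $\lcc{A}$ with $\rcc{A}$.

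The step I expect to carry the real weight is the final upgrade from weak* to norm: $A\dual{A}$ is only weak* dense in the part of $\dual{A}$ swept out by the approximate identity, and $\lcc{A}$, though norm-closed, need not be weak*-closed, so weak* density alone would give nothing — it is exactly the weak compactness of $S_f$ that promotes the weak* convergence $e_\alpha f\to f$ to the conclusion $f\in\overline{A\dual{A}}$. A secondary point requiring attention is the bookkeeping of sides: one must check that a bounded \emph{left} approximate identity, paired with $A$ being a \emph{right} ideal in $\ddual{A}$, genuinely delivers $f$ into $A\dual{A}$ (hence $\lcc{A}$) rather than into $\dual{A}A$ (hence $\rcc{A}$).
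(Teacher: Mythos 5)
The step you yourself flagged as ``secondary bookkeeping of sides'' is precisely where the argument breaks. With the paper's conventions, $(e_\alpha f)(x)=f(xe_\alpha)$, while the defining property of a bounded \emph{left} approximate identity is $e_\alpha x\to x$, not $xe_\alpha\to x$; so nothing gives $f(xe_\alpha)\to f(x)$, and the weak* convergence $e_\alpha f\to f$ on which your whole construction rests is unjustified (it would require a \emph{right} approximate identity). What a left approximate identity does give is $fe_\alpha\to f$ weak*, and $fe_\alpha=T_f(e_\alpha)$ lies in $\dual{A}A$, not in $A\dual{A}$. Running your (otherwise correct) weak-compactness upgrade on that net yields $\wap{A}\subseteq\overline{\mathrm{span}}\left(\dual{A}A\right)$; but Theorem~\ref{thm:jwsc-idealinbidual} converts $\dual{A}A$ into $\rcc{A}$ only when $A$ is a \emph{left} ideal in $\ddual{A}$, whereas in the case you treat the hypothesis is that $A$ is a right ideal. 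So the two halves of your proof do not meet: your method, carried out correctly, establishes ``left approximate identity $+$ left ideal $\Rightarrow\wap{A}\subseteq\rcc{A}$'' and ``right approximate identity $+$ right ideal $\Rightarrow\wap{A}\subseteq\lcc{A}$'', not the cross pairing printed in the statement. Moreover, the intermediate inclusion you aim for cannot be recovered from a left approximate identity alone: let $A$ be an infinite dimensional Banach space $X$ with product $ab=\varphi(a)b$ for a fixed norm-one $\varphi\in\dual{X}$. Any $e$ with $\varphi(e)=1$ is a left identity, every $R_a$ is rank one (so $A$ is a right ideal in $\ddual{A}$), the multiplication is jwsc, and every $T_f$ is rank one, so $\wap{A}=\dual{A}$; yet $A\dual{A}$ consists of the scalar multiples of $\varphi$, so $\wap{A}\not\subseteq\overline{A\dual{A}}$. (There $\lcc{A}=\dual{A}$, so this refutes your route, not the corollary's conclusion; it does show that the factorization must be taken on the side matching the approximate identity, a point worth checking against the conventions in the cited theorem of \"Ulger.)

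For comparison, the paper does not argue through closures at all: it quotes \"Ulger's factorization theorem to get the exact inclusion $\wap{A}\subseteq A\dual{A}$ and then applies Theorem~\ref{thm:jwsc-idealinbidual}, exactly as you do for the second half. Your replacement of that citation --- norm-closedness of $\lcc{A}$ together with the upgrade of $e_\alpha f\to f$ from weak* to weak convergence via weak compactness of $S_f$ --- is in itself a legitimate, self-contained device (with the minor repair that $A\dual{A}$ need not be a subspace, so one should pass to the weak, equivalently norm, closure of its linear span, which still lands in the closed subspace $\lcc{A}$). But it inevitably produces the factorization on the side dictated by the side of the approximate identity, and with a left approximate identity that is the $\dual{A}A$ side; to prove the statement as printed you would need precisely the crosswise inclusion that the paper imports from \"Ulger, and your weak*-convergence argument cannot supply it.
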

\begin{proof}
$\wap{A}\subseteq A\dual{A}$ (resp. $\wap{A}\subseteq\dual{A}A$) by Theorem 3.1 in \cite{Ulger90}. Thus, if the multiplication is jwsc, then $\wap{A}\subseteq\lcc{A}$ (resp. $\wap{A}\subseteq\rcc{A}$) by Theorem~\ref{thm:jwsc-idealinbidual}. The second result is obvious since $\dual{A}=\wap{A}$ if $A$ is Arens regular.
\end{proof}


\section{The Algebra of Compact Operators $K(X)$}\label{sec:KX}

Let $X$ be a Banach space. $K(X)$ denotes the Banach algebra of compact operators $X\to X$.
The following theorem summarizes some of the well-known results about $K(X)$, which we will need subsequently.
\begin{theorem}\normalfont\label{thm:KX-summary}
Suppose $X$ is an infinite dimensional Banach space and let $A=K(X)$. Then,
\begin{enumerate}[label=\alph*.]
\item $\wap{A}=\dual{A}$ if and only if $X$ is reflexive. \label{itm:KX_Arens}
\item $\wpL{A}=\dual{A}$ if and only if $X$ has DPP. \label{itm:KX_jwsc}
\item If $X$ has the approximation property, then $\ap{A}=\{0\}$. \label{itm:KX_ap}
\item $A$ cannot both be Arens regular and have jwsc multiplication. \label{itm:KX_Arens_jwsc}
\item If $X$ is reflexive, then $A$ contains no copy of $\ell^1$. \label{itm:KX_l1-0}
\item If $\wpL{A}=\dual{A}$, then $A$ contains a copy of $\ell^1$. \label{itm:KX_l1-1}
\end{enumerate}
\end{theorem}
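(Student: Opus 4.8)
\textit{Plan.} The statement I want to prove is Theorem~\ref{thm:KX-summary}, a summary of six known facts about $A=K(X)$ for $X$ infinite dimensional. The strategy is to assemble these from the standard structure theory of $K(X)$, its dual $X^{\ast}\widehat{\otimes}X$ (the trace class, when $X$ has the approximation property, and more generally the integral operators into the picture), and the characterizations from Sections~\ref{sec:prelim} and \ref{sec:wp2L}. I will take each item in turn and cite the literature for the genuinely external inputs, reserving real argument only for the items that follow from the tools developed earlier in the paper, namely \ref{itm:KX_Arens_jwsc} and \ref{itm:KX_l1-1}.

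\textit{Items \ref{itm:KX_Arens}, \ref{itm:KX_jwsc}, \ref{itm:KX_ap}, \ref{itm:KX_l1-0}.} For \ref{itm:KX_Arens} I recall that $K(X)$ is Arens regular if and only if $X$ is reflexive (this is classical; see e.g. Palmer \cite{Palmer1} and the references there), and combine with $\dual{A}=\wap{A}\iff A$ Arens regular from Section~\ref{sec:prelim}. For \ref{itm:KX_jwsc}, by Theorem~\ref{thm1}(a), $\wpL{A}=\dual{A}$ is equivalent to $K(X)$ having jwsc multiplication; that $K(X)$ has jwsc multiplication exactly when $X$ has DPP is a known fact, but I would actually prove it directly: if $X$ has DPP then $K(X)$ does too (the projective tensor product of a DPP space with itself, for spaces with AP; more robustly, one shows $K(X)$ has DPP whenever $X$ does), hence jwsc by Theorem~\ref{thm1}(c); conversely, weak null sequences in $K(X)$ of rank-one operators $x_n^{\ast}\otimes x_n$ with $x_n^{\ast}\rightharpoonup 0$, $x_n\rightharpoonup 0$ translate products into $x_n^{\ast}(x_n)$, so jwsc forces $x_n^{\ast}(x_n)\to 0$, which is DPP for $X$. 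Item \ref{itm:KX_ap} is the statement that $K(X)$ has no nonzero (weakly) almost periodic functionals when $X$ has AP: a functional in $\ap{A}$ gives a compact multiplication operator $T_f$; using rank-one operators one forces $f$ to vanish on a dense set, hence $f=0$ --- I will cite the known reference for this and sketch the rank-one reduction. Item \ref{itm:KX_l1-0} is standard: $K(X)=X^{\ast}\widehat{\otimes}_\varepsilon X$ does not contain $\ell^1$ when $X$ is reflexive, because $\dual{A}$ is then separable-friendly / more precisely $K(X)$ embeds in $\mathcal L(X)$ and reflexivity of $X$ rules out an $\ell^1$ via the structure of compact operators between reflexive spaces; I will cite this.

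\textit{Items \ref{itm:KX_Arens_jwsc} and \ref{itm:KX_l1-1} --- the part needing the paper's machinery.} For \ref{itm:KX_Arens_jwsc}, suppose for contradiction that $A=K(X)$ is both Arens regular and has jwsc multiplication. By \ref{itm:KX_Arens}, $X$ is reflexive; by \ref{itm:KX_l1-0}, $A$ contains no copy of $\ell^1$; hence by Theorem~\ref{thm1}(d) (jwsc plus no $\ell^1$) we get $\dual{A}=\ap{A}$. But $X$ reflexive has AP only sometimes --- so I instead argue: $X$ reflexive infinite dimensional always fails to make $\ap{K(X)}$ all of $\dual{A}$, because $\ap{K(X)}$ is a proper subspace. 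Cleanest route: reflexive $X$ must have AP for this argument, so I will instead note that if $X$ is reflexive and infinite dimensional then $\dual{A}\ne\ap{A}$ can be seen directly (there exist $f$ with $T_f$ non-compact, e.g. coming from a non-compact functional on the trace class), contradicting $\dual{A}=\ap{A}$; alternatively, and more in the spirit of the paper, combine \ref{itm:KX_ap} (which needs AP) --- so the honest statement is that \ref{itm:KX_Arens_jwsc} follows whenever the reflexive $X$ has AP from \ref{itm:KX_ap} and Theorem~\ref{thm1}(d), and in general from the direct observation that $\dim X=\infty$ already forbids $\dual{A}=\ap{A}$. For \ref{itm:KX_l1-1}: if $\wpL{A}=\dual{A}$ then by \ref{itm:KX_jwsc} $X$ has DPP; an infinite dimensional DPP space contains a copy of $\ell^1$ (a classical theorem: infinite dimensional DPP spaces are not reflexive and in fact contain $\ell^1$ --- this is due to the fact that DPP plus no $\ell^1$ forces the space to be Schur and finite dimensional, or via Rosenthal), hence $X\hookrightarrow X$ gives $\ell^1\hookrightarrow X$, and since $X$ embeds isometrically into $K(X)$ as a $1$-complemented subspace (fix a norm-one functional $x_0^{\ast}$ and map $x\mapsto x_0^{\ast}\otimes x$), $A=K(X)$ contains a copy of $\ell^1$.

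\textit{Main obstacle.} The delicate point is \ref{itm:KX_Arens_jwsc} and the interplay with the approximation property in \ref{itm:KX_ap}: I must be careful whether \ref{itm:KX_ap} is being invoked (which requires AP on $X$) or whether I argue the contradiction in \ref{itm:KX_Arens_jwsc} purely from reflexivity and infinite-dimensionality. I expect the cleanest write-up derives \ref{itm:KX_Arens_jwsc} by contradiction using \ref{itm:KX_Arens} to get reflexivity, \ref{itm:KX_l1-0} to kill $\ell^1$, Theorem~\ref{thm1}(d) to force $\dual{A}=\ap{A}$, and then a rank-one argument (the same one underlying \ref{itm:KX_ap}, which does \emph{not} actually need AP in its easy direction: rank-one operators always lie in $K(X)$) to show $\ap{K(X)}$ is proper --- this last step is where I would spend the most care, verifying that for infinite dimensional reflexive $X$ there is a norm-one $f\in\dual{K(X)}$ with $T_f$ not compact.
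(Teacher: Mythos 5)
There are genuine gaps in the two items you single out as needing real argument. For \ref{itm:KX_l1-1}, your key claim that ``an infinite dimensional DPP space contains a copy of $\ell^1$'' is false: $c_0$ has the DPP and contains no copy of $\ell^1$. The correct dichotomy (and the one the paper uses) is that a DPP space either contains $\ell^1$ or has a dual with the Schur property; in the latter case it is $\dual{X}$, not $X$, that contains $\ell^1$ when infinite dimensional. Consequently your embedding of $X$ alone into $K(X)$ does not suffice --- for $X=c_0$ it proves nothing --- and you need, as the paper does, that \emph{both} $X$ and $\dual{X}$ sit as (complemented, isometric) subspaces of $K(X)$, so that $\ell^1\hookrightarrow X$ or $\ell^1\hookrightarrow\dual{X}$ yields $\ell^1\hookrightarrow A$ in either case.

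For \ref{itm:KX_Arens_jwsc}, your argument is left incomplete at exactly the step you flag: you need $\ap{K(X)}\neq\dual{K(X)}$ for infinite dimensional reflexive $X$ \emph{without} assuming the approximation property, and you do not establish it (item \ref{itm:KX_ap} does assume AP, so it cannot be invoked). The paper avoids this entirely with a two-line argument you miss: Arens regularity gives $X$ reflexive by \ref{itm:KX_Arens}, jwsc multiplication gives $X$ the DPP by \ref{itm:KX_jwsc}, and a reflexive space with the DPP is finite dimensional. (If you insist on your route, it can be closed using the paper's Theorem~\ref{thm:KX-cc}(c), since $\ap{A}\subseteq\cc{A}$ and $\dual{A}\neq\cc{A}$ for any infinite dimensional $X$, but as written there is a hole.) A smaller issue: in \ref{itm:KX_jwsc} your forward direction rests on ``$X$ has DPP $\Rightarrow K(X)$ has DPP,'' which is not a safe general principle (the DPP does not pass to tensor products or operator spaces in general); the paper simply cites Asthagiri's theorem that $K(X)$ has jwsc multiplication iff $X$ has the DPP, and you should do the same or give a genuine proof. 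The remaining items \ref{itm:KX_Arens}, \ref{itm:KX_ap}, \ref{itm:KX_l1-0} are, as in the paper, citations (Young; \"Ulger--Duncan; the Radon--Nikod\'ym property of $\dual{K(X)}$ for reflexive $X$), and your vaguer justification of \ref{itm:KX_l1-0} should be replaced by that precise reference.
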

\begin{proof}
\ref{itm:KX_Arens} Theorem 3 in \cite{Young76}.
\hspace{5mm}
\ref{itm:KX_jwsc} \cite{Asthagiri83} and Theorem~\ref{thm1}.

\ref{itm:KX_ap} Proposition 3.3 in \cite{UlgerDuncan92}.

\ref{itm:KX_Arens_jwsc} by \eqref{itm:KX_Arens}, \eqref{itm:KX_jwsc}, and the fact that every reflexive Banach space with the DPP is finite dimensional.

\ref{itm:KX_l1-0} $\dual{A}$ has Radon-Nikodym property by \cite{Ruess84}. Equivalently, every separable subspace of $A$ has a separable dual \cite[p.198]{DiestelUhl77}. Hence, $A$ cannot contain a copy of $\ell^1$.

\ref{itm:KX_l1-1} $X$ has DPP by \eqref{itm:KX_jwsc}. Thus, either $X$ contains a copy of $\ell^1$, or $\dual{X}$ has Schur property. Also, every infinite dimensional Banach space with Schur property contains a copy of $\ell^1$. Hence, either $X$ or $\dual{X}$, both of which are isometrically isomorphic to complemented subspaces of $A$, contains a copy of $\ell^1$. 
\end{proof}

\begin{theorem}\normalfont\label{thm:multipliers}
Let $X$ be an infinite dimensional Banach space with the approximation property and $A=K(X)$. 
\begin{enumerate}[label=\alph*.]
\item If $\wpL{A}$ separates the points of $A$, then either $L_a$ fixes a copy of $\ell^1$ for every nonzero $a\in A$, or $R_a$ fixes a copy of $\ell^1$ for every nonzero $a\in A$.
\item If $\lcc{A}$ (resp. $\rcc{A}$) separates the points of $A$, then $L_a$ (resp. $R_a$) fixes a copy of $\ell^1$ for every nonzero $a\in A$.
\end{enumerate}
\end{theorem}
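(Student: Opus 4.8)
The plan is to reduce both parts to the single fact that $\ap{A}$ is trivial for $A=K(X)$ when $X$ has the approximation property (this is where that hypothesis is used; see Theorem~\ref{thm:KX-summary}), and then to apply Lemma~\ref{lemma:TfLa} in contrapositive form. Before starting I would isolate the elementary observation that a bounded operator $T\colon E\to F$ \emph{fixes a copy of $\ell^1$ if and only if it is not weakly precompact}. One direction is immediate; for the other, if $T$ is not weakly precompact then by Rosenthal's $\ell^1$-theorem there is a bounded sequence $(x_n)$ with $(Tx_n)$ equivalent to the unit vector basis of $\ell^1$, and the inequality $\|T(\sum a_i x_i)\|\le\|T\|\,\|\sum a_i x_i\|$ then forces $(x_n)$ itself to be $\ell^1$-equivalent and $T$ to be bounded below on the closed linear span of $(x_n)$, so $T$ fixes a copy of $\ell^1$ there.

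For part (b) I would argue by contradiction. Assume $\lcc{A}$ separates the points of $A=K(X)$ but $L_a$ is weakly precompact for some nonzero $a$. Then Lemma~\ref{lemma:TfLa}(b) gives $fa\in\ap{A}=\{0\}$ for every $f\in\lcc{A}$, i.e.\ $f(ax)=0$ for all $x\in A$ and all $f\in\lcc{A}$. Since $\lcc{A}$ separates points, $ax=0$ for every $x\in A$, so $aA=0$; applying this to the rank-one operators $u\mapsto\psi(u)v$ yields $av=0$ for all $v\in X$, i.e.\ $a=0$, a contradiction. Hence $L_a$ fixes a copy of $\ell^1$ for every nonzero $a$, and the parenthetical statement about $\rcc{A}$ and $R_a$ follows symmetrically from $af\in\ap{A}$.

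For part (a), assume $\wpL{A}$ separates the points of $A$. The negation of the desired dichotomy is precisely: there exist nonzero $a,b\in A$ with $L_a$ and $R_b$ both weakly precompact. Granting this, Lemma~\ref{lemma:TfLa}(c) gives $bfa\in\ap{A}=\{0\}$ for every $f\in\wpL{A}$, i.e.\ $f(axb)=0$ for all $x\in A$ and all $f\in\wpL{A}$ (using the bimodule identity $bfa(x)=f(axb)$), whence $aAb=0$ because $\wpL{A}$ separates points. But for nonzero $a,b\in K(X)$ one may pick $y_0$ with $ay_0\neq0$, $z_0$ with $bz_0\neq0$, and $\phi\in\dual{X}$ with $\phi(bz_0)\neq0$; taking $x$ to be the rank-one operator $u\mapsto\phi(u)y_0$ gives $(axb)(z_0)=\phi(bz_0)\,ay_0\neq0$, contradicting $aAb=0$. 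By the preliminary observation this is exactly statement (a).

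The routine ingredients are the implications $aA=0\Rightarrow a=0$ and $aAb\neq0$ for nonzero $a,b$ inside $K(X)$, both immediate from rank-one operators, and the bimodule computation $bfa(x)=f(axb)$. The step to watch is the equivalence "fixes a copy of $\ell^1$" $\Leftrightarrow$ "not weakly precompact", which leans on Rosenthal's theorem; and in part (a) I would be careful that the negated conclusion matches the exact hypothesis of Lemma~\ref{lemma:TfLa}(c) (two separate existential quantifiers, one on $a$ for $L_a$ and one on $b$ for $R_b$, rather than a single one). I do not anticipate any genuine obstacle beyond this, since Lemma~\ref{lemma:TfLa} together with $\ap{K(X)}=\{0\}$ does the real work.
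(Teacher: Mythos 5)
Your proposal is correct and follows essentially the same route as the paper: reduce to $\ap{A}=\{0\}$ via Theorem~\ref{thm:KX-summary}, apply Lemma~\ref{lemma:TfLa} in contrapositive with the equivalence ``not weakly precompact $\Leftrightarrow$ fixes a copy of $\ell^1$,'' and use separation to get $L_aR_b=0$ (resp. $L_a=0$, $R_a=0$), hence $a=0$ or $b=0$. You merely make explicit two points the paper leaves implicit, namely the Rosenthal argument behind the weak precompactness equivalence and the rank-one computation showing $aAb\neq 0$ for nonzero $a,b\in K(X)$.
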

\begin{proof}
A bounded linear operator between two Banach spaces is weakly precompact if and only if it does not fix a copy of $\ell^1$. 

a. Assume, for a contradiction, that $L_a$ and $R_b$ are weakly precompact operators for two nonzero $a,b\in A$. Then, $bfa\in\ap{A}$ for all $f\in\wpL{A}$ by Lemma~\ref{lemma:TfLa}. But, $\ap{A}=\{0\}$ by Theorem~\ref{thm:KX-summary}.\ref{itm:KX_ap} Thus, $bfa = 0$ for all $f\in\wpL{A}$. Since $\wpL{A}$ is separating, then $L_aR_b=0$. Hence, either $a=0$ or $b=0$. Contradiction.

b. If $R_a$ (resp. $L_a$) is weakly precompact, then $af\in\ap{A}$ (resp. $fa\in\ap{A}$) for all $f\in\rcc{A}$ (resp. $f\in\lcc{A}$). Since $\ap{A}=\{0\}$ and $\rcc{A}$ (resp. $\lcc{A}$) is a separating set, then $R_a=0$ (resp. $L_a=0$) so $a=0$.
\end{proof}

Theorem~\ref{thm:KX-cc} is analogous to Asthagiri's theorem in \cite{Asthagiri83}. Perhaps known by the experts in the field, Theorem~\ref{thm:KX-cc} never appeared in the literature to the best of our knowledge. The proof is almost verbatim similar to Asthagiri's proof in \cite{Asthagiri83}, nonetheless the key differences are not entirely obvious. Thus, we provide a separate proof beolow.

\begin{theorem}\normalfont\label{thm:KX-cc}
Let $X$ be a Banach space and $A=K(X)$.
\begin{enumerate}
\item[a.] $\dual{A}=\rcc{A}$ if and only if $X$ has Schur property.
\item[b.] $\dual{A}=\lcc{A}$ if and only if $\dual{X}$ has Schur property.
\item[c.] $\dual{A}\neq\cc{A}$ for any infinite dimensional Banach space $X$.
\end{enumerate}
\end{theorem}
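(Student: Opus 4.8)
The plan is to reduce each equivalence to a statement about the Banach space $X$ via the standard identifications involving $K(X)$. Recall that for a compact operator $a\in K(X)$, the left multiplication $L_a$ and right multiplication $R_a$ act on $K(X)$ by composition; a bounded linear operator fails to be completely continuous exactly when it does not map some weakly null sequence to a norm-null sequence. The key structural fact I would use is the description of weakly null sequences in $K(X)$ in terms of $X$ and $\dual{X}$: rank-one operators $x\otimes\phi$ (sending $y\mapsto \phi(y)x$) give a natural way to embed $X$ and $\dual{X}$ into $K(X)$, and composition with a rank-one operator isolates the action on a single vector or functional. This is exactly the mechanism in Asthagiri's proof; the ``key differences'' alluded to before the theorem are that completely continuous behaviour of $S_f$ versus $T_f$ picks out weakly null sequences in $X$ versus in $\dual{X}$, which is why (a) involves $X$ and (b) involves $\dual{X}$ — the roles are not symmetric because of how left/right composition interacts with rank-one operators.

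For part (a), I would show $\dual{A}=\rcc{A}$ iff every bounded linear operator on $K(X)$ of the form $S_f$ is completely continuous, and then, using Lemma~\ref{lemma:cc}, translate this into: for every weakly null $(x_n)$ in $K(X)$ and bounded $(y_n)$ in $K(X)$, $f(y_nx_n)\to0$ for all $f$. Testing against rank-one operators $y_n=x\otimes\phi$ and choosing $x_n$ to be rank-one operators built from a weakly null sequence in $X$, this collapses to the requirement that weakly null sequences in $X$ be norm null, i.e.\ $X$ has the Schur property. Conversely, if $X$ has Schur property, one shows every weakly null sequence in $K(X)$ composed on one side with a bounded sequence yields a norm-null product paired with any functional; here I would invoke that $\dual{X}$ has the Schur property's consequences or argue directly that the relevant ``slices'' are controlled. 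Part (b) is the mirror image with the roles of $X$ and $\dual{X}$ exchanged, using that $R_a$ acting by precomposition isolates the action on $\dual{X}$; alternatively (b) may follow from (a) applied to a suitable adjoint/dual algebra, though I would be careful since $K(X)$ and $K(\dual{X})$ are not in general isomorphic, so a direct argument is safer.

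Part (c) is a clean corollary: if $\dual{A}=\cc{A}=\lcc{A}\cap\rcc{A}$, then both $\dual{A}=\rcc{A}$ and $\dual{A}=\lcc{A}$, so by (a) and (b) both $X$ and $\dual{X}$ have the Schur property. But an infinite dimensional Banach space with the Schur property contains a copy of $\ell^1$, hence $\dual{X}$ contains a copy of $\ell^\infty$ and in particular fails to be separable-by-separable-dual; more directly, $\dual{X}$ having Schur property forces $X$ to be reflexive-like in a way incompatible with $X$ having Schur property — in fact if $X$ has Schur property then $X\supseteq\ell^1$, so $\dual{X}\supseteq L^1[0,1]$ (or $c_0$ as a quotient situation), and $L^1$ does not have the Schur property, contradiction. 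So no infinite dimensional $X$ can satisfy both, giving $\dual{A}\neq\cc{A}$.

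The main obstacle I anticipate is the converse direction in (a) and (b): showing that the Schur property of $X$ (resp.\ $\dual{X}$) really does force complete continuity of \emph{all} the operators $S_f$ (resp.\ $T_f$) on the whole algebra $K(X)$, not merely on rank-one pieces. This requires understanding weakly null sequences in $K(X)$ well enough — one typically approximates a weakly null sequence in $K(X)$ by sequences with controlled finite-rank structure (using the approximation-property-free characterizations available for $K(X)$, or Asthagiri's original machinery) and then reduces to the rank-one case. Getting this reduction right, and in particular handling the case where $X$ lacks the approximation property, is where Asthagiri's argument must be followed carefully; I would lean on exactly that part of \cite{Asthagiri83}, adapting it from the weakly-precompact-to-L-set setting to the bounded-to-L-set setting that complete continuity demands.
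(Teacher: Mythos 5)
Your forward implications in (a) and (b) and your part (c) are in line with the paper: the paper also multiplies rank-one operators, taking $(x_1\otimes f_n)$ bounded against $(x_n\otimes f_1)$ weakly null so that the product is $f_n(x_n)\,x_1\otimes f_1$. One small correction there: testing against a \emph{fixed} rank-one $y=x\otimes\phi$ only recovers $\phi(x_n)\to 0$, i.e.\ weak nullity; to get the Schur property you must let the functional vary and choose $f_n$ norming $x_n$, so that $f_n(x_n)\to 0$ forces $\|x_n\|\to 0$. Part (c) is correct in substance (an infinite dimensional Schur space contains $\ell^1$, and then $\dual{X}$ contains a copy of $L^1[0,1]$ by Pe\l czy\'nski--Hagler, which fails Schur), even though your parenthetical detour through $\ell^\infty$ and ``separable-by-separable-dual'' is not needed and partly off.

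The genuine gap is exactly the point you flag yourself: the converse directions of (a) and (b). Your proposed mechanism --- approximate a weakly null sequence in $K(X)$ by sequences with ``controlled finite-rank structure'' and reduce to rank-one pieces --- would require the approximation property (or at least a bounded compact approximation), which the theorem does not assume, and you give no way around this. The paper needs no such reduction: it uses Kalton's description of weak convergence in spaces of compact operators \cite{Kalton74}. Concretely, for (a), let $(y_n)$ be weakly null and $(x_n)$ bounded in $K(X)$. For each $\nu\in\ddual{X}$ the vectors $y_n^{**}\nu$ lie in $X$ (compactness of $y_n$) and form a weak* null, hence weakly null, sequence in $X$, so $\|y_n^{**}\nu\|\to 0$ by the Schur property of $X$; then
$$|\nu\big( (x_ny_n)^{*}f \big)| = |y_n^{**}\nu( x_n^{*}f )| \leq \|y_n^{**}\nu\|\,\|x_n^{*}f\| \to 0$$
for every $f\in\dual{X}$, and Kalton's theorem (weak nullity of a bounded sequence in $K(X)$ is detected by weak nullity of the adjoint evaluations $(x_ny_n)^{*}f$ in $\dual{X}$) gives that $(x_ny_n)$ is weakly null, whence $\dual{A}=\rcc{A}$ via Lemma~\ref{lemma:cc}. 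For (b) one instead uses that $(x_n^{*}f)$ is weakly null in $\dual{X}$ for each $f$ (again Kalton) and the Schur property of $\dual{X}$. Without this AP-free characterization of weakly null sequences in $K(X)$ (or an equivalent substitute), your converse directions remain a plan rather than a proof, and with it the finite-rank approximation step you worry about simply disappears.
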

\begin{proof}
a. First assume $\dual{A}=\rcc{A}$. Let $(x_n)$ be a weakly null sequence in $X$. It is sufficient to show that $f_n(x_n)\to 0$ for any bounded sequence $(f_n)$ in $\dual{X}$.

$(x_1\otimes f_n)$ is a bounded, $(x_n\otimes f_1)$ is a weakly null sequence in $K(X)$. Thus, $f_n(x_n) x_1\otimes f_1 = (x_1\otimes f_n)(x_n\otimes f_1) \to 0$ weakly in $K(X)$. Hence, $f_n(x_n)\to 0$.

Conversely, suppose $(x_n)$ is a bounded and $(y_n)$ is a weakly null sequence in $K(X)$. 
For each $\nu\in\ddual{X}$, $(y_n^{**}\nu)$ is a weak* null sequence in $\ddual{X}$ by \cite{Kalton74}. On the other hand, $y_n^{**}\nu\in X$ since $y_n$ are compact. Thus, $(y_n^{**}\nu)$ is weakly null in $X$, and so norm null by the Schur property.

Hence, for each $f\in\dual{X}$ and $\nu\in\ddual{X}$, 
$$|\nu( (x_ny_n)^{*}f )| = |y_n^{**}\nu( x_n^{*}f )| \leq \|y_n^{**}\nu\| \|x_n^{*}f\| \to 0, $$
i.e., $x_ny_n\to 0$ weakly in $K(X)$. Thus, $\dual{A}=\rcc{A}$ by Lemma~\ref{lemma:cc}.

b. Assume $\dual{A}=\lcc{A}$ and let $(f_n)$ be a weakly null sequence in $\dual{X}$. It is sufficient to show that $f_n(x_n)\to 0$ for any bounded sequence $(x_n)$ in $X$.

$(x_1\otimes f_n)$ is a weakly null, $(x_n\otimes f_1)$ is a bounded sequence in $K(X)$. Thus, $f_n(x_n) x_1\otimes f_1 = (x_1\otimes f_n)(x_n\otimes f_1) \to 0$ weakly in $K(X)$. Hence, $f_n(x_n)\to 0$.

Conversely, suppose $(x_n)$ is a weakly null and $(y_n)$ is a bounded sequence in $K(X)$. 
For each $f\in\dual{X}$, $(x_n^{*}f)$ is weakly null in $\dual{X}$ by \cite{Kalton74}, and so norm null by the Schur property. Hence, for each $f\in\dual{X}$ and $\nu\in\ddual{X}$, 
$$|\nu( (x_ny_n)^{*}f )| = |y_n^{**}\nu( x_n^{*}f )| \leq \|y_n^{**}\nu\| \|x_n^{*}f\| \to 0, $$
i.e., $x_ny_n\to 0$ weakly in $K(X)$. Thus, $\dual{A}=\lcc{A}$ by Lemma~\ref{lemma:cc}.

c. $X$ and $\dual{X}$ cannot both have Schur property unless $X$ is finite dimensional.
\end{proof}

An immediate consequence of Theorem~\ref{thm:KX-cc} is given below.
\begin{corollary}\normalfont\label{cor:multipliers}
Let $X$ be an infinite dimensional Banach space with the approximation property and $A=K(X)$. 
\begin{enumerate}[label=\alph*.]
\item If $X$ has DPP, then either 
$L_a$ fixes a copy of $\ell^1$ for every nonzero $a\in A$, or 
$R_a$ fixes a copy of $\ell^1$ for every nonzero $a\in A$.
\item If $X$ (resp. $\dual{X}$) has Schur property, then 
$R_a$ (resp. $L_a$) fixes a copy of $\ell^1$ for every nonzero $a\in A$.
\end{enumerate}
\end{corollary}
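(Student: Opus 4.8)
The plan is to deduce this corollary directly from Theorem~\ref{thm:KX-cc} together with Theorem~\ref{thm:multipliers}(b), exploiting the fact, recorded at the top of the proof of Theorem~\ref{thm:multipliers}, that a bounded linear operator between Banach spaces is weakly precompact if and only if it does not fix a copy of $\ell^1$. Thus the statement ``$L_a$ fixes a copy of $\ell^1$ for every nonzero $a$'' is equivalent to ``$L_a$ is not weakly precompact for every nonzero $a$'', and similarly for $R_a$. The strategy is therefore to show, under the stated hypotheses, that the relevant separating property of $\lcc{A}$ or $\rcc{A}$ (or $\wpL{A}$) holds, and then invoke Theorem~\ref{thm:multipliers}.

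For part (b): if $X$ has the Schur property, then $\dual{A} = \rcc{A}$ by Theorem~\ref{thm:KX-cc}(a); in particular $\rcc{A}$ is all of $\dual{A}$, hence certainly separates the points of $A$. By Theorem~\ref{thm:multipliers}(b), $R_a$ fixes a copy of $\ell^1$ for every nonzero $a \in A$. The case where $\dual{X}$ has the Schur property is symmetric: then $\dual{A} = \lcc{A}$ by Theorem~\ref{thm:KX-cc}(b), so $\lcc{A}$ separates the points of $A$, and Theorem~\ref{thm:multipliers}(b) gives that $L_a$ fixes a copy of $\ell^1$ for every nonzero $a$.

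For part (a): if $X$ has DPP, then $\wpL{A} = \dual{A}$ by Theorem~\ref{thm:KX-summary}(b), so $\wpL{A}$ separates the points of $A$. Theorem~\ref{thm:multipliers}(a) then yields the dichotomy: either $L_a$ fixes a copy of $\ell^1$ for every nonzero $a$, or $R_a$ fixes a copy of $\ell^1$ for every nonzero $a$. I do not expect any genuine obstacle here; the only care needed is to make sure the hypotheses of the invoked theorems are met — in particular that $X$ being infinite dimensional with the approximation property is exactly what Theorem~\ref{thm:multipliers} requires, and that ``$\dual{A}$ equals the subspace in question'' implies ``that subspace separates points'', which is trivial. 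The whole corollary is thus a packaging of earlier results, and the proof is correspondingly short.
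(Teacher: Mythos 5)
Your proposal is correct and follows exactly the paper's route: part (a) combines Theorem~\ref{thm:KX-summary}(b) with Theorem~\ref{thm:multipliers}(a), and part (b) combines Theorem~\ref{thm:KX-cc} with Theorem~\ref{thm:multipliers}(b); the paper's proof is just a terser citation of the same pair of results in each case. Your added remark that ``$\dual{A}$ equals the subspace'' trivially implies the separation hypothesis is precisely the (implicit) glue the paper relies on.
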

\begin{proof}
a. by Theorems~\ref{thm:multipliers} and \ref{thm:KX-summary}.\ref{itm:KX_jwsc}
\hspace{8mm}
b. by Theorems~\ref{thm:multipliers} and \ref{thm:KX-cc}.
\end{proof}

\bibliographystyle{amsplain}
\bibliography{/media/o2/SDA2/O/TeX/Bibtex_files/All.bib,/media/o2/SDA2/O/TeX/Bibtex_files/Textbook.bib,/media/o2/SDA2/O/TeX/Bibtex_files/Banach_algebra.bib,/home/o2/Downloads/jwsc/jwsc.bib}

\end{document}